\documentclass[12pt]{article}
\usepackage{amssymb,amsmath}
\usepackage{hyperref}
\usepackage{xcolor}
\hypersetup{
	colorlinks,
	linkcolor={red!60!black},
	citecolor={green!60!black}
}

\newcommand{\qed}{\hskip 5mm \rule{2.5mm}{2.5mm}\vskip 10pt}
\newcommand{\R}{{\mathbb R}}

\newcommand{\N}{{\mathbb N}}

\newcommand{\E}{{\mathbb E}}
\renewcommand{\P}{{\mathbb P}}

\numberwithin{equation}{section}

\begin{document}
\newtheorem{theorem}{Theorem}[section]
\newtheorem{definition}[theorem]{Definition}
\newtheorem{lemma}[theorem]{Lemma}
\newtheorem{note}[theorem]{Note}
\newtheorem{corollary}[theorem]{Corollary}
\newtheorem{proposition}[theorem]{Proposition}
\newtheorem{remark}[theorem]{Remark}
\renewcommand{\theequation}{\arabic{section}.\arabic{equation}}
\newcommand{\newsection}[1]{\setcounter{equation}{0} \section{#1}}
\title{The Stein-Chen method and a Law of Small Numbers in Riesz Spaces\footnote{AMS Subject Classification: {46A40; 47A60; 60F05}.
Keywords: {Riesz spaces; laws of small numbers; conditional expectation operators; Stein-Chen method}.
}}
\author{
 Wen-Chi Kuo${}^\sharp {}^\flat$\\
 Nigel Musara${}^\sharp$\\
 Bruce A Watson${}^\sharp {}^\circ$\\
  \\
 ${}^\sharp$ School of Mathematics,
 University of the Witwatersrand\\
 Private Bag 3, P O WITS 2050, South Africa\\
 \\
 ${}^\flat$ NITheCS.\\
${}^\circ$ DSTI-NRF Centre of Excellence in Mathematical and Statistical Sciences.
  }
\maketitle
\abstract{\noindent Martingales, Markov processes and Laws of Large Numbers have been well studied in the Riesz space (vector lattice) setting. There has, however, been no attention given  in the Riesz space setting to Laws of Small Numbers or to the so called Stein-Chen method. Here we adapt the Stein-Chen method to the Riesz space setting and hence give a conditional Laws of Small Numbers for Bernoulli processes in Riesz spaces. This requires extensive use of functional calculus and the associated $f$-algebra structure.
}

\parindent=0cm
\parskip=0.5cm
\section{Introduction} \label{s: introduction}

The study of random processes in Riesz spaces began with the generalization of 
martingale theory, see DeMarr \cite{DeMarr}, Stoica \cite{Stoica},  Kuo, Labuschagne and Watson \cite{KLW1} and Grobler \cite{jensen}. 
A law of large numbers was given via ergodic theory in \cite{ergodic} and for mixingales in Riesz spaces in \cite{KVW1}. 
More concrete processes such as Markov processes, 
Bernoulli processes and auto regressive processes of order 1 were considered in the Riesz space setting in \cite{markov, KRW2, KVW2, vw-markov}. 
The current work
follows the trends of the above three cited works by considering Laws of Small Numbers in Riesz spaces via an extension of the Stein-Chen method to Riesz spaces. 
Here, we apply the Stein-Chen method to a conditionally independent sequences of components of a weak order unit to obtain a rate of convergence to a Poisson distribution and hence a Law of Small Numbers for such a process. 
This is one of the simplest processes in the Riesz space setting and gives a model problem which highlights the hurdles that need to be overcome when dealing with Laws of Small Numbers for more general processes in Riesz spaces. 
An up to date survey of the Laws of Small Numbers can be found in \cite{FHR}, while interest in Laws of Small Numbers dates at as far back as 1890, see  \cite[page 118]{Haight}.

The roots of the so called Stein-Chen method for Poisson approximation lie in the recurrence formula (2.12) of \cite{Stein-1970} and (2.3) of \cite{Chen}.  
Since the appearance of \cite{Chen, Stein-1970}, this approach has been streamlined and applied to more and more general processes to yield Laws of Small Numbers for them. 
A brief distillation of the Stein-Chen method, by Barbour and Chen, can be found in the Preface of \cite{BC}, while a comprehensive coverage of the ideas behind the method and some of its applications are given in \cite{CGR} and the lectures of Stein \cite{Stein}. 
There is however a vast literature on the Stein-Chen method and its applications, see \cite{BH, CGR, CGS, FHR, Haight, Ross} and their bibliographies. 

In particular, if $X_1,\dots,X_n$ is a sequence of $0-1$ valued independent random variables in a probability space  $(\Omega,\mathcal{A},\mathbb{P})$,  Stein, see \cite{Stein}, showed that 
$\displaystyle{\max_{j=1,\dots,n} p_j}$, where $p_i=\mathbb{E}[X_i=1]$,
is an upper bound on the difference between the distribution of $\displaystyle{W=\sum_{i=1}^n X_i}$ and the Poisson distribution with parameter $\displaystyle{\lambda=\sum_{i=1}^n p_i}$.
Thus the Poisson distribution with parameter $\lambda$ is a good approximation to the distribution of $W$ if the $p_i, i=1,\dots,n,$ are small and hence the term `laws of small numbers'.This type of result is closely linked to the Chernoff inequality given by Ben Amor and Omrani in \cite{Amine-Amal}.
This project was initiated as a research project, \cite{NM},  by Musara while a postgraduate student at the University of the Witwatersrsand the under the supervision of Kuo and Watson. 
We refer the readers to \cite{NM} for further background on the Stein-Chen method.

In Section 2, we give the aspects of Riesz spaces and $f$-algebras used in this paper. In Section 3,  we give the lifting of the Poisson distribution to Riesz spaces, in Section 4 we generalize the approach of Stein and Chen to Riesz spaces to obtain two laws of small number. In Section 5 we present an application.


\section{Riesz space preliminaries} \label{preliminary section}

All material from this section is known and can be found in \cite{expectation, IFSA,ergodic, KRW,vw-markov}, it is placed here for the reader's convenience.
We say that a linear operator $T$  on an Archimedean 
Riesz space (vector lattice), $E$, with weak order unit, is a conditional expectation operator  if $T$ is a positive order continuous projection which maps weak order units to weak order units and has $R(T)$ Dedekind complete. 
If, in addition, $T|x|=0$ for $x\in E$ implies $x=0$, then we say that $T$ is strictly positive.
For $T$ a conditional expectation operator on $E$, since $T$ is a projection and $T$ maps weak order units to weak order units, there is a weak order unit, say $u$, with $u=Tu$.

We denote the positive cone of $E$ by $E_+=\{x\in E\,|\, x\ge 0\}$. If 
$w\in E_+$, we say that $v$ is a component of $w$ if $0\le v\le w$ and $(w-v)\wedge v=0$.
If $f\in E_+$, we denote the band projection onto the band generated by $f$ by $P_f$ and the associated component of $u$ by $u_f:=P_fu$.

Let $E$ be a Dedekind complete Riesz space with weak order unit and $T$ be a strictly positive conditional expectation operator on $E$.
The $T$-universal completion of $E$ is 
 $$\hat{E}=\mbox{dom}(T)-\mbox{dom}(T):=\{f-g|f,g\in\mbox{dom}(T)\},$$
where
$$\mbox{dom}(T):=\{f\in E^u_+|\exists  \mbox{ net } f_\alpha\uparrow f \mbox{ in } E^u,
(f_\alpha)\subset E_+, Tf_\alpha \mbox{ bdd in } E^u\}$$
and $E^u$ is the universal completion of $E$.
$\hat{E}$ is a Dedekind complete Riesz space containing $E$ as an order dense subspace, and each weak order unit of $E$ is again a weak order unit of $\hat{E}$.
The space $\hat{E}$ will be denoted $L^1(T)$, see \cite{expectation}.
Further,  $T$ admits unique extension to a conditional expectation on the $T$-universal completion, $\hat{E}$, of $E$.
In particular, if $f\in \hat{E}_+$ then there is a net $(f_\alpha)$ in $E_+$ with $(Tf_\alpha)$ bounded in
 the universal completion, $E^u$, of $E$, with $f_\alpha\uparrow f$.
Then $Tf_\alpha\uparrow$ in $E$ and is order bounded in $E^u$ and  as such has limit, which we denote $\hat{T}(f)$,
  in $\hat{E}$.
We extend $\hat{T}$ to the whole of $\hat{E}$ by setting 
 $$\hat{T}f=\hat{T}f^+-\hat{T}f^-,\quad f\in\hat{E}.$$
It can now be verified that the extension $\hat{T}:\hat{E}\to \hat{E}$ of $T$ is a 
 conditional expectation operator on $\hat{E}$.  The details can be found in \cite{expectation}.
The essential ideas for this extension of the space and operator originated in a paper of Grobler and de Pagter \cite{GdP}.

 Let $E$ be a $T$-universally complete Riesz space, where $T$ is a conditional expectation
  operator on 
  $E$, and let $u$ be a weak order unit for $E$ with $Tu=u$.
  $R(T)$ is universally complete and hence an $f$-algebra, see \cite{KRW}.
  If $u\in R(T)$ is a weak order unit, then $u$ is invertible in $R(T)$, and hence in $E$. 
  See the Appendix for more details on inverses and partial inverses.
  Furthermore, $L^1(T)$ is an $R(T)$-module and 
   $T$ is an averaging operator in the sense that if $f\in R(T)$ 
and $g\in E=L^1(T)$ then $fg\in E$ with $T(fg)=fT(g)$.
It should be noted that if $u=Tu$ is the weak order unit of $E$ chosen to be the algebraic unit
of $R(T)$ and $E^u$, then for components $p$ and $q$ of $u$ we have that $p\cdot q=p\wedge q$. We also note that if $p$ is a component of $u$ then $p\cdot f=P_p f$ where 
$P_p$ is the band projection on the band generated by $p$. Further if $f\in E_+$ then 
$p\cdot f=p\wedge f$.
There is a bijective correspondence, ${\frak F}$, between the band projections ${\cal{BP}}$ on $L^1(T)$ and the components $C_u$ of $u$ in $L^1(T)$, given by 
 ${\frak F}[P]=Pu$.  Here $$Pf=(Pu)\cdot f= {\frak F}[P]\cdot f,$$
for all $f\in L^1(T)$.

The concept of $T$-conditional independence was generalized from the probability space setting to that of a Dedekind complete Riesz space, say $E$, with weak order unit, say $u$, and conditional expectation in $T$ having $Tu=u$ as follows in
\cite[Definition 4.1]{ergodic}.

\begin{definition}
 Let $E$ be a Dedekind complete Riesz space with conditional expectation $T$ and weak order unit $u=Tu$.
 Let $P$ and $Q$ be band projections on $E$, we say that $P$ and $Q$ are $T$-conditionally independent if
 \begin{eqnarray}
  TPTQu=TPQu=TQTPu.\label{indep-e}
 \end{eqnarray}
 We say that two Riesz subspaces $E_1$ and $E_2$ of $E$ are $T$-conditionally independent if
 all band projections $P_i, i=1,2,$ in $E$ with  $P_ie\in E_i, i=1,2,$ are $T$-conditionally independent.
\end{definition}

The above definition, in the case of the $R(T)$-module $L^1(T)$ can be expressed as follows.

\begin{lemma}\label{lem-ind-comp-1}
 Let $p_1$ and $p_2$ be components of the weak order unit $u=Tu$ in $E=L^1(T)$,
 then $p_1$ and $p_2$ 
 are $T$-conditionally independent if and only if
 \begin{eqnarray}
  Tp_1\cdot Tp_2=T(p_1p_2).\label{indep-comp-u}
 \end{eqnarray}
 Two Riesz subspaces $E_1$ and $E_2$ of $E$ are $T$-conditionally independent if and only if
 each pair of components $p_i\in E_i, i=1,2,$ of $u$ are $T$-conditionally independent.
\end{lemma}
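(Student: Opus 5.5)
The plan is to translate everything through the correspondence ${\frak F}$ between band projections and components of $u$, together with the averaging property of $T$ on the $R(T)$-module $L^1(T)$. Let $P_i$ denote the band projection with $P_iu=p_i$, $i=1,2$. Recall that $P_if=p_i\cdot f$ for all $f\in L^1(T)$, and that for components $p_1p_2=p_1\wedge p_2=P_1P_2u$.

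For the first claim we compute each of the three expressions in (\ref{indep-e}). First, $TP_1P_2u=T(p_1\cdot p_2)=T(p_1p_2)$. Next, $TP_2u=Tp_2\in R(T)$, so $P_1TP_2u=p_1\cdot Tp_2$. Because $Tp_2\in R(T)$ and $T$ is averaging, we get $TP_1TP_2u=T(p_1\cdot Tp_2)=Tp_2\cdot Tp_1$. By the same argument, $TP_2TP_1u=Tp_1\cdot Tp_2$. Multiplication in the $f$-algebra is commutative, so the two outer terms of (\ref{indep-e}) are automatically equal, both to $Tp_1\cdot Tp_2$. Hence (\ref{indep-e}) holds if and only if $Tp_1\cdot Tp_2=T(p_1p_2)$, which is (\ref{indep-comp-u}).

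For the second claim we use that band projections $P$ on $E$ correspond bijectively to components $Pu$ of $u$. We interpret the condition ``$Pe\in E_i$'' in the definition as $Pu\in E_i$, with $e=u$ the weak order unit. Under that reading, the band projections $P_i$ with $P_iu\in E_i$ are exactly the projections $P_{p_i}$ for components $p_i\in E_i$ of $u$. The first part then gives the equivalence immediately.

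The only delicate point is this bookkeeping: the products and the averaging identity $T(fg)=fTg$ for $f\in R(T)$ must take place in $L^1(T)$, where the $f$-algebra structure is available. Since the lemma is stated in $E=L^1(T)$, which is $T$-universally complete, this is justified by the preliminaries. I expect no further obstacle.
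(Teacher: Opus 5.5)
Your proposal is correct and follows the paper's proof. The paper likewise uses the averaging property to get $T(p_1\cdot Tp_2)=Tp_1\cdot Tp_2=T(p_2\cdot Tp_1)$, so that the two outer terms of (\ref{indep-e}) always agree and the condition reduces to (\ref{indep-comp-u}). It then invokes the correspondence between band projections and components of $u$ for the statement about subspaces. Your version writes out the identifications the paper leaves implicit, namely $P_1TP_2u=p_1\cdot Tp_2$, $P_1P_2u=p_1p_2$, and reading $e$ as $u$ in the definition.
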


\begin{proof}
Since $T$ is an averaging operator,  for all components $p_1$ and $p_2$ of $u$,
 $$T(p_1\cdot Tp_2)=(Tp_1)\cdot (Tp_2)=T(p_2\cdot Tp_1).$$ 
The above noted correspondence between band projections and components of the chosen weak order unit gives the remainder of the result.
\qed
\end{proof}

The following corollary, from \cite{vw-markov},  written in terms of components of $u$, 
relates $T$-conditional independence of the components $p$ and $q$ of $u$
 with
$T$-conditional independence of the closed Riesz subspaces $\left< p, {R}(T)\right>$ and $\left< q, {R}(T)\right>$
generated by $p$ and ${R}(T)$ and by $q$ and ${R}(T)$ respectively.

\begin{corollary}
 Let $p_i, i=1,2,$ be components of the weak order unit $u=Tu$ in $L^1(T)$.
 Then $p_i, i=1,2,$  are $T$-conditionally independent
 if and only if the closed Riesz subspaces
 $E_i=\left< p_i, {R}(T)\right>, i=1,2,$ are $T$-conditionally independent.
\end{corollary}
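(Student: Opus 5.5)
The direction from subspaces to components is immediate: $p_i\in E_i$ for $i=1,2$, so by Lemma \ref{lem-ind-comp-1} the components $p_1,p_2$ inherit $T$-conditional independence from $E_1$ and $E_2$. The substance lies in the converse. Assume $Tp_1\cdot Tp_2=T(p_1p_2)$. By Lemma \ref{lem-ind-comp-1} it suffices to prove
\[ T(q_1q_2)=Tq_1\cdot Tq_2 \]
for every pair of components $q_i$ of $u$ with $q_i\in E_i=\left<p_i,R(T)\right>$.

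The first step is to describe the components of $u$ lying in $E_i$. Elements of $\left<p,R(T)\right>$ have the form $a\cdot p+b\cdot(u-p)$ with $a,b\in R(T)$. The set of such elements is a Riesz subspace because $p$ and $u-p$ are disjoint components, so lattice operations act coordinatewise on the pair $(a,b)$. It is also order closed, since $R(T)$ is order closed and multiplication by a component is the band projection $P_p$. Hence this set equals $E_i$.

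A component $q$ of $u$ of this form must satisfy $q\cdot q=q$. This forces $a\cdot p$ and $b\cdot(u-p)$ to be components, and we may then take $a=r$ and $b=s$ with $r,s$ components of $u$ lying in $R(T)$. Here I will need the fact that a component of $u$ of the form $a\cdot p$ with $a\in R(T)$ can be rewritten as $r\cdot p$ with $r=P_a u$ a component in $R(T)$. I expect this to hold because band projections generated by elements of $R(T)$ commute with $T$, and the argument should go through functional calculus or the band projection correspondence $\frak F$. So
\[ q_i=r_i p_i+s_i(u-p_i),\qquad r_i,s_i\in R(T)\text{ components of }u. \]

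Next comes the computation. Note that $u-p_i$ is independent of $p_j$, because $T((u-p_1)p_2)=Tp_2-T(p_1p_2)=Tp_2-Tp_1Tp_2=T(u-p_1)\,Tp_2$. Likewise the pair $u-p_1,\,u-p_2$ is independent. Expanding $q_1q_2$ bilinearly gives four terms of the form $r_1r_2\,p_1p_2$ and similar, with $R(T)$ coefficients. By the averaging property, $T(c\,x\,y)=c\,T(xy)$ for $c\in R(T)$. Each $T(xy)$ factors as $Tx\cdot Ty$ by the independence relations just noted. Recollecting the terms gives
\[ \bigl(r_1Tp_1+s_1T(u-p_1)\bigr)\bigl(r_2Tp_2+s_2T(u-p_2)\bigr)=Tq_1\cdot Tq_2. \]

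The main obstacle is the structural step: identifying $E_i$ with $\{ap+b(u-p)\}$ and showing its components have coefficients that are components of $R(T)$. This requires care with order closure in $L^1(T)$ and with the $f$-algebra multiplication. If the explicit description proves awkward, I would instead prove the factorization identity for simple elements $\sum_k c_k\,x_k$ and pass to order limits, using order continuity of $T$ and of multiplication in the $f$-algebra. Since this result is quoted from \cite{vw-markov}, one could also cite it directly.
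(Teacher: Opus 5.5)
The paper does not prove this corollary; it imports it from \cite{vw-markov}, so there is no internal argument to compare with. Your proof is correct in outline and works as a self-contained proof. The easy direction is right. The derived relations such as $T((u-p_1)p_2)=T(u-p_1)\,Tp_2$ follow correctly from $Tu=u$. The four-term expansion together with the averaging property gives the factorization. Two remarks follow.

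\textbf{The step you call the main obstacle is half superfluous.} Your final computation never uses that $r_i,s_i$ are components. Take arbitrary $a_i,b_i\in R(T)$ and $x_i=a_ip_i+b_i(u-p_i)$. Each of the four terms of $x_1x_2$ is an $R(T)$-multiple of a product of components of $u$, so it lies in $L^1(T)$. The same expansion then gives $T(x_1x_2)=Tx_1\cdot Tx_2$. So all you need is the inclusion $E_i\subseteq F_i:=\{a p_i+b(u-p_i): a,b\in R(T)\}$, and the idempotent analysis can be dropped. Your expected rewriting is nonetheless true: if $a\ge 0$ and $ap$ is a component of $u$, then $ap=P_{ap}u=P_aP_pu=u_a\,p$ with $u_a\in R(T)$.

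\textbf{The inclusion $E_i\subseteq F_i$ needs an argument you have not given.} It rests on $F_i$ being order closed, and the reason you give does not establish this. From $a_\alpha p\to y$ nothing forces $(a_\alpha)$ to converge. First, $a_\alpha$ is determined by $a_\alpha p$ only modulo elements of $R(T)$ disjoint from $p$. Second, even after normalising, you still need an order bound on $(a_\alpha)$ in $R(T)$.

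Here is a repair, with $p=p_i$.
\begin{itemize}
\item If $a\in R(T)$ and $ap=0$, then $|a|\perp p$. Since $u_{|a|}\in R(T)$, the averaging property gives $P_{|a|}Tp=T(P_{|a|}p)=0$. Hence $a\mapsto ap=P_pa$ is an injective lattice homomorphism on $P_{Tp}R(T)$.
\item $(P_{Tp}a)p=ap$, because $P_p\le P_{Tp}$ (Lemma \ref{lem-bands}).
\item In the Dedekind complete space $L^1(T)$ it suffices to show that $0\le x_\alpha\uparrow x$ with $x_\alpha\in F_i$ implies $x\in F_i$.
\item Write $x_\alpha=a_\alpha p+b_\alpha(u-p)$ with $0\le a_\alpha\in P_{Tp}R(T)$ and $0\le b_\alpha\in P_{T(u-p)}R(T)$. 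Injectivity makes $(a_\alpha)$ increasing.
\item Let $J\ge 0$ be the canonical partial inverse of $Tp$ in $R(T)$. Then
\[
a_\alpha=a_\alpha\,(Tp)\,J=T(a_\alpha p)\,J\le T(P_px)\,J\in R(T).
\]
\item So $a_\alpha\uparrow a$ in $R(T)$. Since $P_p$ is order continuous, $P_px=\sup_\alpha a_\alpha p=ap$.
\item Similarly $(I-P_p)x=b(u-p)$ with $b\in R(T)$, so $x\in F_i$.
\end{itemize}

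With this in place your argument is complete, and the fallback through simple elements and order limits is not needed.
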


For ease of notation, if $(E_\lambda)_{\lambda\in \Lambda}$ is a family of Riesz subspaces of $E$ we denote the closed Riesz subspace of $E$, generated by $(E_\lambda)_{\lambda\in \Lambda}$, by
$$E_{\Lambda} := \left< \bigcup_{\lambda\in\Lambda_j} E_\lambda \right>.$$

\begin{definition}\label{def-n}
 Let $E_\lambda, \lambda\in \Lambda,$ be a  family of closed Riesz subspaces of $L^1(T)$ having
 ${R}(T)\subset E_\lambda$ for all $\lambda\in\Lambda$. We say that the family is $T$-conditionally independent
 if, for each pair of disjoint sets $\Lambda_1, \Lambda_2 \subset \Lambda$, we have that the pair
 $E_{\Lambda_1}$ and $E_{\Lambda_2}$ is $T$-conditionally independent.
\end{definition}

If $F$ is a universally complete Riesz space with weak order unit, say $e$, then $F$ is an $f$-algebra and $e$ can be taken as the algebraic unit, see \cite[Theorem 3.6]{venter}.
In this setting, 
 we say that $g\in F$ has a partial inverse if there exists $h\in F$ such that $gh=hg=P_{|g|}e$ where $P_{|g|}$ denotes the band projection onto the band generated by $|g|$. 
We refer to $h$ as the canonical partial inverse of $g$ if, in addition, to being a partial inverse to $g$, we have that $(I-P_{|g|})h=0$, i.e.,  $h\in {\cal B}_{|g|}$,  where ${\cal B}_{|g|}$ is the band generated by $|g|$.
Here ${\cal B}_f$ denotes the band generated by $f$ and $P_f$ the band projection onto ${\cal B}_f$.

The existence, uniqueness and positivity results concerning partial inverses and canonical partial inverses can be deduced from \cite[Theorem 5]{HdP-1986} and \cite[Remark 3.3]{RS-2019}, we summarize them in the below theorem.

 \begin{theorem}
Let $F$ be a  universally complete Riesz space with weak order unit, say $e$, which we also take as the algebraic unit of  the associated $f$-algebra structure.  Each $g\in F$ has a partial inverse $h\in F$. 
The canonical partial inverse of $g$ exists, is unique and in this case $g$ is also the canonical partial inverse of $h$. If $g\in F_+$ then so is its canonical partial inverse.
  \end{theorem}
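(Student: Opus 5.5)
The natural route is to represent $F$ concretely and do the algebra pointwise. Since $F$ is universally complete with weak order unit $e$, the Maeda--Ogasawara (or Johnson) representation identifies $F$, as a Riesz space and as an $f$-algebra with unit $e$, with $C^\infty(Q)$. Here $Q$ is an extremally disconnected compact Hausdorff space, $e$ corresponds to the constant function $1$, and multiplication is pointwise off nowhere dense sets. Bands of $F$ correspond to clopen subsets of $Q$. For $g\in F$, the band ${\cal B}_{|g|}$ corresponds to the clopen set $U_g=\overline{\{q: g(q)\neq 0\}}$, and $P_{|g|}e$ corresponds to the indicator $\chi_{U_g}$.

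\textbf{Existence.} I would define $h$ on the dense open set $\{g\neq 0\}\cup (Q\setminus U_g)$ by $h=1/g$ on $\{g\neq 0\}$ and $h=0$ on $Q\setminus U_g$. This function is continuous and extended-real valued, and finite on a dense open set, so by extremal disconnectedness it extends uniquely to an element of $C^\infty(Q)$. Then $gh=hg=\chi_{U_g}=P_{|g|}e$ on a dense set, hence in $F$. So $h$ is a partial inverse, and since $h$ vanishes off $U_g$ we have $(I-P_{|g|})h=0$, so $h$ is canonical. If $g\ge 0$ then $h\ge 0$ on a dense set, hence $h\in F_+$. This establishes existence and positivity.

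\textbf{Uniqueness.} Suppose $h,h'$ are both canonical partial inverses of $g$. Then $g(h-h')=0$, so $h-h'$ vanishes on $\{g\neq 0\}$ and, by continuity, on $U_g$. Since both $h$ and $h'$ lie in ${\cal B}_{|g|}$, $h-h'$ also vanishes off $U_g$. Hence $h=h'$. An abstract alternative avoids the representation: in an $f$-algebra, $gk=0$ iff $|g|\wedge|k|=0$, so $h-h'\in {\cal B}_{|g|}\cap{\cal B}_{|g|}^d=\{0\}$.

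\textbf{Symmetry.} The set $\{h\neq 0\}$ is dense in $U_g$ and $h$ vanishes off $U_g$, so ${\cal B}_{|h|}={\cal B}_{|g|}$. Then $hg=P_{|h|}e$ and $g\in{\cal B}_{|h|}$, so $g$ is a canonical partial inverse of $h$, and by uniqueness it is the canonical one.

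\textbf{Main obstacle.} The delicate step is the existence construction: checking that the pointwise-defined $h$ extends to an element of $C^\infty(Q)$ and that the $f$-algebra product agrees with the pointwise product. This rests on the representation theorem and on density arguments in extremally disconnected spaces, which I would cite from \cite{HdP-1986} rather than reprove.
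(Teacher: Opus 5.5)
Your proof is correct, but it does not follow the paper's route. The paper gives no argument of its own. It only says the statement can be deduced from \cite[Theorem 5]{HdP-1986} and \cite[Remark 3.3]{RS-2019}, staying inside the abstract theory of inverses in unital $f$-algebras. Elsewhere it uses that weak order units of a universally complete $f$-algebra are invertible. Along those lines, the canonical partial inverse is $P_{|g|}$ applied to the inverse of the weak order unit $g+(e-P_{|g|}e)$, and no representation theory is needed.

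You instead pass to the Maeda--Ogasawara model $C^\infty(Q)$ and write the canonical partial inverse down explicitly: $1/g$ on $\{g\ne 0\}$ and $0$ off $U_g$. This makes positivity and the equality ${\cal B}_{|h|}={\cal B}_{|g|}$, and hence the symmetry, transparent.

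The price is two background facts you should state explicitly rather than fold into ``density arguments''.
\begin{itemize}
\item An $f$-algebra multiplication with prescribed unit $e$ on an Archimedean Riesz space is unique. This is what guarantees that the abstract product on $F$ coincides with the pointwise product on $C^\infty(Q)$; the Maeda--Ogasawara theorem by itself is only a Riesz space representation.
\item Open subsets of an extremally disconnected space are $C^*$-embedded. Applied through $[-\infty,\infty]\cong[-1,1]$, this extends $h$ from the dense open set $D$ to all of $Q$. On $D$ your $h$ is in fact real valued, since $1/g=0$ where $g=\pm\infty$.
\end{itemize}
Both facts are standard (Maeda--Ogasawara; Gillman--Jerison) and need not be attributed to \cite{HdP-1986}.

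One small correction concerns your abstract uniqueness argument. The implication $gk=0\Rightarrow |g|\wedge|k|=0$ fails in general $f$-algebras; it holds in semiprime ones. $F$ qualifies, because an Archimedean $f$-algebra with unit is semiprime.
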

 
The following aspect of the averaging property of conditional expectation operators was first proved in Riesz spaces in \cite[Corollary 2.3]{IFSA}.

\begin{lemma}\label{lem-bands}
Let $E$ be a Dedekind complete Riesz space with weak order unit $u$ and strictly positive conditional expectation operator $T$ with $Tu=u$. 
If $f\in E_+$ then the band projection $P_f$ onto the band generated by $f$ and the band projection onto the band generated by $Tf$ are related by $P_f\le P_{Tf}$.   
 \end{lemma}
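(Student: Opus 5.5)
To prove $P_f\le P_{Tf}$ for $f\in E_+$, I would show that $f$ is disjoint from every element disjoint from $Tf$. Equivalently, I would show $(I-P_{Tf})f=0$. Then $f$ lies in the band ${\cal B}_{Tf}$, so ${\cal B}_f\subset{\cal B}_{Tf}$, which is exactly $P_f\le P_{Tf}$.

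The key input is that $P_{Tf}$ commutes with $T$ and, more precisely, that $TP_{Tf}=P_{Tf}T$. Since $Tf\in R(T)$, the band it generates is determined by an element of the range of $T$. Its projection is therefore the band projection of the form $P_g$ with $g\in R(T)$.

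To see that such projections commute with $T$, I would use the component $q:=P_{Tf}u$. I would check that $q\in R(T)$: approximate $q$ as the supremum of $(n\,Tf)\wedge u$. This is an increasing net that is not itself in $R(T)$, since $R(T)$ is not a sublattice a priori. I would therefore argue instead via the order continuity of $T$ and the averaging property: $T$ maps $(nTf)\wedge u$ to elements dominated by $q$ … This step is the main obstacle. The cleaner route is to cite that $R(T)$ is a Dedekind complete Riesz subspace, as in the standard theory (e.g.\ \cite{KLW1}), so that $(nTf)\wedge u\in R(T)$ and its supremum $q$ is in $R(T)$.

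Once $q\in R(T)$, I would pass to the $T$-universal completion $L^1(T)$, where the averaging property holds. There, for any $g$, I get $T(P_{Tf}g)=T(q\cdot g)=q\cdot Tg=P_{Tf}Tg$. Now set $h=(I-P_{Tf})f\ge 0$. Then
$$Th=(I-P_{Tf})Tf=0,$$
since $Tf$ lies in the band of $Tf$. Strict positivity of $T$ gives $h=0$. Hence $f=P_{Tf}f$, so $f\in{\cal B}_{Tf}$ and $P_f\le P_{Tf}$.

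Finally I would note the reduction needed for $E$ itself. If the averaging property is only stated on $L^1(T)$, the computation can be done in $\hat E$ with the extended $\hat T$. This works because $E$ is order dense in $\hat E$, band projections restrict compatibly, and $\hat T$ remains strictly positive on $E_+$. In any case $h\in E_+$ and $Th=\hat Th=0$, so strict positivity of $T$ on $E$ suffices.
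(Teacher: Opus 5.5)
Your proof is correct and is the standard argument behind the result the paper cites from \cite[Corollary 2.3]{IFSA}; the paper gives no proof of its own and presents the lemma as a consequence of the averaging property, which is how you derive it: $q=P_{Tf}u\in R(T)$, so averaging gives $TP_{Tf}=P_{Tf}T$, hence $T(I-P_{Tf})f=(I-P_{Tf})Tf=0$, and strict positivity forces $(I-P_{Tf})f=0$. The step you flagged as the obstacle needs no outside citation, but the justification you settled on is slightly off: strict positivity already makes $R(T)$ a Riesz subspace (for $x\in R(T)$, $Tx^+\ge x^+$ and $T(Tx^+-x^+)=0$), and it is the order continuity of $T$, applied to $(nTf)\wedge u\uparrow q$, that gives $Tq=q$, since Dedekind completeness of $R(T)$ alone would not guarantee that a supremum taken in $E$ lies in $R(T)$.
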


\section{Poisson distribution in Riesz spaces}

We extend the definition of  the Poisson distribution with parameter $\lambda$, denoted 
${\rm Po}(\cdot;\lambda)$, to the  setting of a Dedekind complete Riesz space $E$ with 
weak order unit, say $u$, and conditional expectation $T$ with $Tu=u$. 
For this,  we assume that $E$ is $T$-universally complete,  as in this case $R(T)$ is a universally complete $f$-algebra with algebraic unit $u$.
Thus for each $H\in R(T)$,   $H^k\in R(T)$ for all
$k\in \mathbb{N}$.  For brevity of notation, we set $H^0:=u$. 

Recall that we denote $u_g:=P_gu$ where $P_g$ is the band projection on the band, $B_g$,  in $E$, generated by $g\in E_+$.

For each $t\in \R$ and $H\in R(T)_+$,
 we have 
 $(tu-H)^+\in R(T),$ so,
 by \cite[Theorem 3.2]{expectation}, 
$u_{(tu-H)^+}\in R(T)_+$.
However, $e^{-H}$ can be expressed, via functional calculus and Freudenthal's theorem,  as 
$$e^{-H}=\sup_{n\in\N}\sum_{k=0}^{n2^n}e^{(k+1)2^{-n}}
P_{((k+1)2^{-n}-H)^+}(I-P_{(H-k2^{-n})^+})u.$$
Thus $e^{-H}\in R(T)_+$.
It is easily verified, in addition, that
$$e^{-H}=\sum_{k=0}^{\infty} \frac{(-H)^k}{k!}$$
where this summation is taken as the order limit of the partial sums.

We define 
\begin{equation}\label{12-28-poisson-1}
{\rm Po}(k;H):=\frac{H^ke^{-H}}{k!},
\end{equation}
for $k\in\N_0$.
This extends to an $R(T)$ valued measure on the $\sigma$-algebra, ${\cal P}(\N_0)$, of all subset of $\N_0$, by setting
\begin{equation}\label{2024-1-12-1}
{\rm Po}(A;H)=\sum_{k\in A}{\rm Po}(k;H),
\end{equation}
for each $A\subset \N_0$ and $H\in R(T)$.

\begin{definition}\label{def-poisson}
Let $E$ be a Dedekind complete Riesz space having weak order unit $u$ and conditional expectation operator $T$,  with $Tu=u$ and $E$ being $T$-universally complete.
Let $f\in E$ be a finite sum of components of $u$.  We say that  $f$ is 
 $T$-conditionally Poisson distributed with parameter $H$ in $R(T)_+$ if 
\begin{equation}\label{f-poisson-def}
T(I-P_{|f-ku|})u={\rm Po}(k;H),
\end{equation}
for each $k\in\N_0$.
\end{definition}

\begin{lemma}\label{pieces}
Let $E$ be a Dedekind complete Riesz space having weak order unit $u$ and conditional expectation operator $T$,  with $Tu=u$ and $E$ being $T$-universally complete.
Let $f\in E$ be a finite sum of components of $u$, then $f$
can be represented as
\begin{equation}\label{discrete-rep}
f=\sum_{j=0}^\kappa j r_j,
\end{equation}
 where each $r_j$ is a component of $u$, $j=0,\dots,\kappa$ with $r_ir_j=0$ for all $i\ne j$ and
$$\sum_{j=0}^\kappa r_j=u.$$
If $f$ obeys a $T$-conditional Poisson distribution, then (\ref{f-poisson-def}) becomes
\begin{equation}\label{f-expl-poisson}
Tr_j={\rm Po}(j;Tf)=\frac{(Tf)^j e^{-Tf}}{j!},
\end{equation}
for each $j=0,1,\dots,\kappa$.
Here $\kappa\in \N_0\cup\{\infty\}$.
\end{lemma}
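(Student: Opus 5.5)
The plan has two parts: first build the representation (\ref{discrete-rep}) from the finite sum of components, then read off (\ref{f-expl-poisson}) from Definition \ref{def-poisson}.

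\textbf{The representation.} Write $f=\sum_{i=1}^n q_i$ with each $q_i$ a component of $u$.

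Components of $u$ form a Boolean algebra, with $p\cdot q=p\wedge q$ and complement $u-p$. We therefore pass to the atoms of the finite Boolean subalgebra generated by $q_1,\dots,q_n$. For each $\epsilon\in\{0,1\}^n$ set
$$s_\epsilon=\prod_{i=1}^n q_i^{\epsilon_i}(u-q_i)^{1-\epsilon_i}.$$
These $s_\epsilon$ are components of $u$, they are pairwise disjoint (their products vanish), and $\sum_\epsilon s_\epsilon=\prod_i(q_i+u-q_i)=u$.

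Since $q_is_\epsilon=\epsilon_i s_\epsilon$, we get $f=\sum_\epsilon fs_\epsilon=\sum_\epsilon |\epsilon|\, s_\epsilon$. Put
$$r_j=\sum_{|\epsilon|=j}s_\epsilon,\qquad j=0,\dots,n,$$
so that $\kappa=n$. Each $r_j$ is a component of $u$, the $r_j$ are mutually disjoint, they sum to $u$, and $f=\sum_j jr_j$.

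\textbf{The Poisson identity.} Fix $k\in\N_0$.

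Using $u=\sum_j r_j$, we have $f-ku=\sum_j (j-k)r_j$. Because the $r_j$ are disjoint,
$$|f-ku|=\sum_{j\ne k}|j-k|\,r_j.$$
Its band is therefore the band generated by $u-r_k$, so
$$(I-P_{|f-ku|})u=r_k \ \text{ for } k\le\kappa, \qquad (I-P_{|f-ku|})u=0 \ \text{ for } k>\kappa.$$
Definition \ref{def-poisson} then gives $Tr_k={\rm Po}(k;H)$.

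To identify the parameter, take $T$ of $f=\sum_j jr_j$. This is a finite sum, so
$$Tf=\sum_j j\,{\rm Po}(j;H)=He^{-H}\sum_{j\ge1}\frac{H^{j-1}}{(j-1)!}.$$

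\textbf{Main obstacle: identifying $H=Tf$.} Here $\kappa$ is finite but ${\rm Po}(j;H)$ is defined for all $j$, and we need $H=Tf$.

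For $k>\kappa$ the identity above gives $0={\rm Po}(k;H)=H^ke^{-H}/k!$. Since $e^{-H}$ is invertible in the $f$-algebra $R(T)$, this forces $H^k=0$. In a semiprime $f$-algebra, $H^k=0$ implies $H=0$. Hence $H=0$, and also $Tf=0$ from the formula above, so $H=Tf$ holds trivially.

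In full generality (the $\kappa=\infty$ allowance) we would instead sum the series $\sum_j j\,{\rm Po}(j;H)$ as an order limit, using $e^{H}e^{-H}=u$, to obtain $Tf=He^{-H}e^{H}=H$. The delicate point is justifying this order convergence and the multiplicativity of the exponential in $R(T)$. For that I would use the functional calculus representation of $e^{-H}$ from above, together with order continuity of multiplication in the universally complete $f$-algebra. Substituting $H=Tf$ into $Tr_j={\rm Po}(j;H)$ then yields (\ref{f-expl-poisson}).
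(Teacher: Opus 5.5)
Your proof is correct. Its core step is exactly the paper's argument: computing $|f-ku|=\sum_{j\ne k}|j-k|\,r_j$, hence $(I-P_{|f-ku|})u=r_k$, and then reading off $Tr_k={\rm Po}(k;H)$ from Definition \ref{def-poisson}.

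You differ from the paper in the two things it leaves unsaid.

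\textbf{The representation.} The paper asserts (\ref{discrete-rep}) ``by direct calculation''. You construct it explicitly from the atoms $s_\epsilon$ of the Boolean algebra generated by $q_1,\dots,q_n$.

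\textbf{The identification $H=Tf$.} The paper never justifies that the parameter $H$ in Definition \ref{def-poisson} equals $Tf$; it passes straight to (\ref{f-expl-poisson}). You close this gap rigorously:
\begin{itemize}
\item the equations for $k>\kappa$ give $H^ke^{-H}=0$;
\item invertibility of $e^{-H}$ and semiprimeness of $R(T)$ then give $H=0$;
\item hence $Tf=\sum_j j\,{\rm Po}(j;0)=0=H$.
\end{itemize}
This also exposes that the hypothesis is degenerate for a genuinely finite sum of components. Only $H=0$ is possible, and if $T$ is strictly positive, $Tf=0$ forces $f=0$. The paper's version buys brevity; yours buys an actual proof of $H=Tf$, plus that structural observation.

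\textbf{The $\kappa=\infty$ paragraph.} This is not needed for the statement as posed. Since $fr_j=jr_j$ and $f\le nu$, you get $jr_j\le nr_j$, so $r_j=0$ for $j>n$. Thus $\kappa$ is effectively finite. The series argument there (order continuity of $T$ and of multiplication, and $e^{H}e^{-H}=u$) is only sketched. It would become the relevant route only for a non-degenerate version of the lemma with infinitely many nonzero $r_j$.
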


\begin{proof}
 By direct calculation, from (\ref{discrete-rep}) we have that, for $k=0,\dots,\kappa$,
 $$u_{|f-ku|}=\sum_{j\ne k} r_j.$$
 From the above expression, we have
 $$u-u_{|f-ku|}=r_k.$$
So  $Tr_k=T(u-u_{|f-ku|})u$ and the remainder of the lemma follows from  (\ref{f-poisson-def}) and the assumption that  $f$   
obeys a $T$-conditional Poisson distribution.
\qed
\end{proof}

\section{Stein-Chen method in Riesz spaces}

Let $E$ be a Dedekind complete Riesz space with conditional expectation operator $T$ and weak order unit $u=Tu$. 
Let  
$q_1,\dots,q_n$ be $T$-conditionally independent components of $u$ in $E$
and
 $$w:=\sum_{i=1}^n q_i.$$
Define $$h_j:=Tq_j$$ and $$H:=\sum_{j=1}^n h_j=Tw.$$

The core question of this work is, how close to being Poisson distributed with parameter $H=Tw$ is $w$?
Recall that ${\rm Po}(k;H)\in R(T)$ for all $k\in \mathbb{N}_0$.
To answer this question we generalizing the recurrence process of Stein and Chen to conditional processes in Riesz spaces.
 
Let $J$ be the canonical partial inverse of $H$ in $R(T)$. Here
$J\cdot H=H\cdot J=P_Hu=u_H$ and $(I-P_H)J=0$. 
Here we recall that $u_H$ is the component of $u$ given by the application of the band projection generated by $H$ onto $u$.

Let $A\subset \N_0$.  Let $\nu_A(j)=\left\{\begin{array}{ll}u,& j\in A\\ 0, & j\not\in A\end{array}\right.$.
Define
 $g(0,H,A):=0$
and, for $j\ge 1$, 
\begin{equation}
g(j,H,A)=J\cdot[(j-1)g(j-1,H,A)+\nu_A(j-1)-{\rm Po}(A;H)]
+\frac{1}{j}(u-u_H)(\nu_A(0)- \nu_A(j)).
\label{2024-1-1}
\end{equation}

From (\ref{12-28-poisson-1}), we have
$$(u-u_H){\rm Po}(k;H)=(u-u_H)\frac{H^ke^{-H}}{k!}=0,$$
for all $k\in\N$, while, from the definition of $Po(0,H)$,
$$(u-u_H){\rm Po}(0;H)= (u-u_H)e^{-H}=u-u_H.$$
Thus
\begin{eqnarray}\label{dagger}
(u-u_H) {\rm Po}(A;H)=\sum_{k\in A}(u-u_H){\rm Po}(k;H) =(u-u_H)\nu_A(0).
\end{eqnarray}
Multiplying (\ref{2024-1-1}) by $u-u_H$ and substituting in (\ref{dagger}) we obtain
\begin{eqnarray}
(u-u_H)g(j,H,A)=\frac{1}{j}(u-u_H)(\nu_A(0)- \nu_A(j))
=\frac{1}{j}(u-u_H)( {\rm Po}(A;H)- \nu_A(j)),\label{2024-1-3} 
\end{eqnarray}
for $j\ge 1$.

For convenience we denote $N(n)=\{0,\dots, n\}$ for $n\in \N_0$. 

\begin{lemma}\label{lem-2024-1}
For $n\ge 1$,
\begin{equation}\label{riesz-recurrence-7-1}
g(n,H,A)=(n-1)!J^ne^H F(n-1,H,A)+\frac{1}{n}(u-u_H)({\rm Po}(A;H)-\nu_A(n)),
\end{equation}
where 
$$F(n,H,A)={\rm Po}(A\cap N(n);H)-{\rm Po}(A;H)\, {\rm Po}(N(n);H).$$
\end{lemma}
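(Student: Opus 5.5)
Split $g(n,H,A)$ into its $u_H$ and $(u-u_H)$ parts, since $J=u_HJ$ and $u_H(u-u_H)=0$. By (\ref{2024-1-3}), $(u-u_H)g(n,H,A)=\frac1n(u-u_H)({\rm Po}(A;H)-\nu_A(n))$. On the other side, $(u-u_H)J^n=0$ for $n\ge1$. So the $(u-u_H)$ component of (\ref{riesz-recurrence-7-1}) holds automatically, and it remains to prove
\begin{equation*}
u_H\, g(n,H,A)=(n-1)!\,J^ne^H F(n-1,H,A),\qquad n\ge 1 .
\end{equation*}
Multiplying (\ref{2024-1-1}) by $u_H$ kills the last term, and $u_HJ=J$. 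This gives the recursion
$$u_Hg(j,H,A)=J\cdot[(j-1)g(j-1,H,A)+\nu_A(j-1)-{\rm Po}(A;H)].$$
Here $g(j-1,H,A)$ sits inside a product with $J$, and $J\cdot g=J\cdot u_Hg$. So only the $u_H$-part of $g(j-1,H,A)$ matters, and the recursion closes on the $u_H$-parts.

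I will prove the displayed identity by induction on $n$. For $n=1$, $g(0,H,A)=0$, so $u_Hg(1,H,A)=J(\nu_A(0)-{\rm Po}(A;H))$. We need this to equal $Je^H F(0,H,A)$, where $F(0,H,A)={\rm Po}(A\cap\{0\};H)-{\rm Po}(A;H)e^{-H}$. Since ${\rm Po}(A\cap\{0\};H)=\nu_A(0)e^{-H}$ and $e^He^{-H}=u$, this holds; $e^H$ is defined in $R(T)$ by the same functional calculus as $e^{-H}$.

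For the inductive step, assume the identity for $n-1\ge1$ and substitute into the recursion:
$$u_Hg(n,H,A)=(n-1)(n-2)!\,J^{n}e^HF(n-2,H,A)+J(\nu_A(n-1)-{\rm Po}(A;H)).$$
Writing $J=(n-1)!J^ne^H\cdot\frac{H^{n-1}e^{-H}}{(n-1)!}$ on $u_H$, and using $J^nH^{n-1}=J\,u_H$, it suffices to check
$$F(n-1,H,A)=F(n-2,H,A)+{\rm Po}(n-1;H)(\nu_A(n-1)-{\rm Po}(A;H)).$$
This follows from the additivity (\ref{2024-1-12-1}):
$${\rm Po}(A\cap N(n-1);H)={\rm Po}(A\cap N(n-2);H)+\nu_A(n-1){\rm Po}(n-1;H),$$
$${\rm Po}(N(n-1);H)={\rm Po}(N(n-2);H)+{\rm Po}(n-1;H).$$

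The main obstacle is justifying the $f$-algebra manipulations in $R(T)$. We need $J^kH^k=u_H$ and $J^{k}H^{m}=J^{k-m}$ on $u_H$ for $k\ge m$, which follow from $JH=u_H$, $u_H^2=u_H$ and $J=u_HJ$. We also need $e^He^{-H}=u$. I would obtain it by functional calculus, since both are continuous functions of $H$ in the universally complete $f$-algebra $R(T)$, where multiplication corresponds to pointwise multiplication of the functions; the alternative is multiplying the order-convergent series. Everything else is bookkeeping of the recursion.
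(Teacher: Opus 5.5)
Your proof is correct and is essentially the paper's argument. The paper solves the same recursion by multiplying (\ref{2024-1-1}) by the integrating factor $H^j/(j-1)!$, telescoping the sum, then multiplying back by $(k-1)!J^k$ and adding the $(u-u_H)$-part from (\ref{2024-1-3}); you run that recursion as an induction directly on the $u_H$-part, using the same ingredients ($JH=u_H$, $J=u_HJ$, $e^He^{-H}=u$ and the additivity of ${\rm Po}(\cdot;H)$).
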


\begin{proof}As $E$ is an $R(T)$-module, we can
multiplying (\ref{2024-1-1}) by $H^j/(j-1)!$ to get, for $j\ge 2$,
\begin{eqnarray}\label{riesz-recurrence}
\frac{H^j}{(j-1)!}  g(j,H,A)=\frac{H^{j-1}}{(j-2)!} g(j-1,H,A)+\frac{H^{j-1}}{(j-1)!}[\nu_A(j-1) -{\rm Po}(A;H)].
\end{eqnarray}
We recall that $g(0,H,A):=0$ and hence, from (\ref{2024-1-1}),
\begin{equation}\label{riesz-recurrence-2-1*}
g(1,H,A)=J[\nu_A(0) - {\rm Po}(A;H)]+(u-u_H)[\nu_A(0)-\nu_A(1)],
\end{equation}
giving
\begin{equation}\label{riesz-recurrence-2-1**}
Hg(1,H,A)=u_H[\nu_A(0) - {\rm Po}(A;H)].
\end{equation}
So, for $k\ge 2$, summing (\ref{riesz-recurrence}) for $j=2,\dots,k$ gives
\begin{eqnarray}\label{riesz-recurrence-3-1*}
 \frac{H^k}{(k-1)!}  g(k,H,A)-H g(1,H,A)=\sum_{j=2}^k\frac{H^{j-1}}{(j-1)!}[\nu_A(j-1) -{\rm Po}(A;H)].
\end{eqnarray}
Thus 
\begin{eqnarray}\label{riesz-recurrence-4-1}
 \frac{H^k}{(k-1)!}  g(k,H,A)=e^H F(k-1,H,A) -(u-u_H)[\nu_A(0) -{\rm Po}(A;H)].
\end{eqnarray}
So multiplying (\ref{riesz-recurrence-4-1}) by $(k-1)!J^k$ and adding (\ref{2024-1-3}) gives the result of the lemma for $k\ge 2$, while for $k=1$ the result follows from (\ref{riesz-recurrence-2-1*}),  (\ref{riesz-recurrence-2-1*}) and the observations that
$\nu_A(0)=e^H{\rm Po}(N(0)\cap A;H)$ and $u=e^H{\rm Po}(N(0),H)$.
\qed
\end{proof}

We now show that for each $n\in \N_0$ the map $A\mapsto g(n,H,A)$ mapping ${\cal P}(\N_0)\to R(T)$ is an $R(T)$-valued measure (i.e. countably additive set function mapping the empty set to $0$) on $\N_0$ with domain the power set of $\N_0$.

\begin{lemma}\label{cor-2024-1}
For each $n\in \N_0$,
the mapping  $g(n,H,\cdot):\mathcal{P}(\N_0)\to R(T)$ which takes $A\subset \N_0$ to $g(n,H,A)$ is an $R(T)$-valued measure with $g(n,H,\N_0)=0$.
 \end{lemma}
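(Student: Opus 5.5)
Since $g(0,H,A)=0$ for every $A$, the case $n=0$ is immediate, so we fix $n\ge 1$ and work from the closed form of Lemma \ref{lem-2024-1}:
$$g(n,H,A)=(n-1)!J^ne^H\big[{\rm Po}(A\cap N(n-1);H)-{\rm Po}(A;H)\,{\rm Po}(N(n-1);H)\big]+\frac{1}{n}(u-u_H)\big({\rm Po}(A;H)-\nu_A(n)\big).$$
The plan is to show that each of the four $A$-dependent pieces, namely $A\mapsto {\rm Po}(A\cap N(n-1);H)$, $A\mapsto{\rm Po}(A;H)$ and $A\mapsto\nu_A(n)$, is an $R(T)$-valued measure. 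After that, $g(n,H,\cdot)$ is a combination of these with coefficients $(n-1)!J^ne^H$, $-(n-1)!J^ne^H{\rm Po}(N(n-1);H)$, $\frac1n(u-u_H)$ and $-\frac1n(u-u_H)$, all lying in $R(T)$ and independent of $A$.

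\textbf{The building blocks are measures.} Each ${\rm Po}(k;H)=H^ke^{-H}/k!$ lies in $R(T)_+$. For disjoint $A_i$ with union $A$, the countable additivity ${\rm Po}(A;H)=\sum_i{\rm Po}(A_i;H)$ then reduces to rearranging an order-convergent series of positive terms. This series is bounded by $\sum_k{\rm Po}(k;H)=e^{-H}e^{H}=u$. Positivity is what makes the order limit independent of the ordering and grouping, because both sides equal the supremum of the finite partial sums.

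The remaining blocks follow quickly. Intersecting with the fixed set $N(n-1)$ preserves disjointness and unions, so $A\mapsto {\rm Po}(A\cap N(n-1);H)$ is also a measure. The map $\nu_\cdot(n)$ is the point mass $u\,\delta_n$, which is trivially countably additive. All the blocks vanish at $\emptyset$.

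\textbf{Linear combinations with fixed coefficients.} Multiplication by a fixed element of the $f$-algebra $R(T)$ is order continuous, since $f$-algebra multiplication by a fixed element is order continuous in an Archimedean $f$-algebra. It therefore carries order-convergent series to order-convergent series. Hence the linear combination above is countably additive with $g(n,H,\emptyset)=0$.

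I expect this order-continuity step to be the only real subtlety. Some of the coefficients, such as $J^n e^H$, need not be bounded multiples of $u$. If necessary I will handle this by splitting each series into its positive parts and using that $x\mapsto ax$ for $a\ge0$ is an order continuous positive operator on a universally complete $f$-algebra.

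\textbf{Total mass.} To show $g(n,H,\N_0)=0$, note that ${\rm Po}(\N_0;H)=e^{-H}\sum_k H^k/k!=e^{-H}e^H=u$, which is the unit. Then
$$F(n-1,H,\N_0)={\rm Po}(N(n-1);H)-u\,{\rm Po}(N(n-1);H)=0.$$
Also $\nu_{\N_0}(n)=u$, so the second term equals $\frac1n(u-u_H)(u-u)=0$. Thus $g(n,H,\N_0)=0$.
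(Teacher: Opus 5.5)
Your proposal is correct and takes essentially the paper's route. Both arguments read countable additivity off the closed form in Lemma \ref{lem-2024-1}, using that $A\mapsto {\rm Po}(A;H)$, $A\mapsto {\rm Po}(A\cap N(n-1);H)$ and the point masses $\nu_\cdot(j)$ are $R(T)$-valued measures, and get $g(n,H,\N_0)=0$ from ${\rm Po}(\N_0;H)=u$. The paper writes the $(u-u_H)$-part as $\frac{1}{n}(\nu_A(0)-\nu_A(n))(u-u_H)$, which agrees with your version by (\ref{dagger}); your separate handling of $n=0$ and your order-continuity justification for multiplication by fixed elements of $R(T)$ supply details the paper leaves implicit.
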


 \begin{proof} For each $n\in \N_0$,
the map  $A\mapsto \nu_A(0)- \nu_A(n)$ is a $\{-u,0,u\}$-valued measure on $\mathcal{P}(\N_0)$.  From (\ref{2024-1-1}), we have
\begin{equation}\label{2024-sept-1}
 (u-u_H)g(n,H,A)=\frac{1}{n}(\nu_A(0)- \nu_A(n)) (u-u_H),
\end{equation}
for $A\subset \N_0$ and $n\in \N_0$.
Thus the map 
$A\mapsto (u-u_H)g(n,H,A)$ is an $R(T)$-valued measure on ${\cal P}(\N_0)$.
Now $\nu_\emptyset(0)=0=\nu_\emptyset(n)$ and 
$\nu_{\N_0}(0)=u=\nu_{\N_0}(n)$ giving $\nu_A(0)-\nu_A(n)=0$ for $A=\emptyset$ and $A=\N_0$. Hence $(u-u_H)g(n,H,\emptyset)=0=(u-u_H)g(n,H,\N_0)$.
 
The map $A\mapsto {\rm Po}(A;H)$ is an $R(T)$-valued measure on ${\cal P}(\N_0)$ and so is the map $A\mapsto{\rm  Po}(A\cap N(n);H)$.
Thus,  from Lemma \ref{lem-2024-1},
 \begin{equation*}
  A\mapsto F(n,H,A)
  \end{equation*}
  is an $R(T)$-valued measure with
 \begin{equation*}
  F(n,H,\N_0)=0
  \end{equation*}
 since 
 ${\rm Po}(\N_0;H)=u$.
 \qed
 \end{proof}

\begin{lemma}\label{j<i}
 For each $i\in\N$ we have that  $Hg(j,H,\{i\})$ is negative decreasing function of $j$ for $j\le i$ with $j\in\N$, and
 \begin{equation}\label{2024-1-12-9}
 Hg(i,H,\{i\})=\frac{-H}{i}{\rm Po}(N(i-1);H).
 \end{equation}
 \end{lemma}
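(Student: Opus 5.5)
We will work directly from Lemma \ref{lem-2024-1} with $A=\{i\}$, multiplied by $H$. Since $H(u-u_H)=0$, the second term of (\ref{riesz-recurrence-7-1}) disappears. Using $HJ^n=J^{n-1}u_H$ and the fact that $J$ already lies in ${\cal B}_H$, we get, for $n\ge1$,
$$Hg(n,H,\{i\})=(n-1)!\,J^{n-1}e^H F(n-1,H,\{i\}).$$
Here $F(n-1,H,\{i\})={\rm Po}(\{i\}\cap N(n-1);H)-{\rm Po}(i;H){\rm Po}(N(n-1);H)$.

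For $n\le i$ we have $\{i\}\cap N(n-1)=\emptyset$, so $F(n-1,H,\{i\})=-{\rm Po}(i;H){\rm Po}(N(n-1);H)$. This gives
$$Hg(n,H,\{i\})=-(n-1)!\,J^{n-1}\frac{H^i}{i!}\sum_{k=0}^{n-1}\frac{H^k e^{-H}}{k!}.$$
Here we used $e^He^{-H}=u$, which follows from functional calculus in the universally complete $f$-algebra $R(T)$. Since $J^{n-1}H^{n-1}=u_H$ (with the convention that $J^0$ multiplies to $u$, which is harmless because $H^i$ for $i\ge1$ lies in ${\cal B}_H$), this simplifies to
$$Hg(n,H,\{i\})=-\frac{(n-1)!}{i!}\sum_{k=0}^{n-1}\frac{H^{i-n+1+k}}{k!}.$$
Substituting $m=n-1-k$ gives
$$Hg(n,H,\{i\})=-\frac{H^{i-n+1}}{i!}\sum_{m=0}^{n-1}\frac{(n-1)!}{(n-1-m)!}H^{n-1-m}e^{-H}.$$
Setting $n=i$ gives $-\frac{H}{i}\sum_{k=0}^{i-1}\frac{H^k}{k!}e^{-H}=-\frac{H}{i}{\rm Po}(N(i-1);H)$, which is (\ref{2024-1-12-9}). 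Every term is a product of positive elements of $R(T)$. Positive elements of an $f$-algebra have positive products, so $Hg(n,H,\{i\})\le0$.

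For monotonicity, I would compare $n$ and $n+1$ with $n+1\le i$, writing the quantity as
$$-\frac{e^{-H}}{i!}\,a_n,\qquad a_n:=\sum_{k=0}^{n-1}\frac{(n-1)!}{k!}H^{i-n+1+k}.$$
We need $a_{n+1}\ge a_n$. Reindexing $a_{n+1}$ by $k$ gives
$$a_{n+1}=\sum_{k=0}^{n}\frac{n!}{k!}H^{i-n+k}.$$
Pairing the powers $H^{i-n+1+k}$ in $a_n$ with the $k+1$ term of $a_{n+1}$, whose coefficient is $\frac{n!}{(k+1)!}$, only compares the coefficients $\frac{(n-1)!}{k!}$ and $\frac{n!}{(k+1)!}$. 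That comparison goes the wrong way for large $k$, so coefficientwise domination fails and pointwise domination is needed instead. I therefore expect this step to be the main obstacle.

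To handle it I would argue in a representation. Either use that $R(T)$, being universally complete with unit $u$, is $f$-algebra isomorphic to an order dense ideal of $C^\infty(X)$ on its Stone space, or simply verify the scalar inequality and lift it by functional calculus. The scalar claim is that
$$\phi_n(x)=(n-1)!\,x^{i-n+1}\sum_{k<n}\frac{x^k}{k!}$$
is nondecreasing in $n$ for $n\le i$ and all $x\ge0$.

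Observe that $\phi_n(x)=x^{i}\,(n-1)!\,x^{-(n-1)}e_{n-1}(x)$, where $e_{n-1}$ is the truncated exponential. Also
$$(n-1)!\,x^{-(n-1)}e_{n-1}(x)=\sum_{m=0}^{n-1}\frac{(n-1)!}{(n-1-m)!}x^{-m},$$
and each coefficient $\frac{(n-1)!}{(n-1-m)!}$ increases with $n$ while the range of $m$ grows. So the scalar monotonicity holds for $x>0$, and at $x=0$ both sides vanish. The same expansion in the band ${\cal B}_H$, using $J$ in place of $x^{-1}$, gives
$$a_n=H^{i}\sum_{m=0}^{n-1}\frac{(n-1)!}{(n-1-m)!}J^{m}.$$
This is termwise increasing in $n$ because $H^iJ^m\ge0$. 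This completes the plan and avoids the failed pairing.
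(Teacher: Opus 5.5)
Your proposal is correct and follows essentially the paper's route. You apply Lemma \ref{lem-2024-1} with $\{i\}\cap N(j-1)=\emptyset$ and read off (\ref{2024-1-12-9}) at $j=i$. For monotonicity you expand $(n-1)!J^{n-1}e^H{\rm Po}(N(n-1);H)$ as $\sum_{m=0}^{n-1}\frac{(n-1)!}{(n-1-m)!}J^m$, multiplied by the positive factor ${\rm Po}(i;H)$, and compare coefficients termwise, exactly as the paper does.

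Your claim that pairing $H^{i-n+1+k}$ in $a_n$ with the $(k+1)$-st term of $a_{n+1}$ ``goes the wrong way'' is false. The coefficient ratio is $\frac{n}{k+1}\ge 1$ for all $0\le k\le n-1$, and with $m=n-1-k$ it is literally the same comparison $\frac{n}{n-m}\ge 1$ that you finally use, so the scalar and Stone-space detour is unnecessary. Also, the display after ``this simplifies to'' drops the factor $e^{-H}$; you restore it in the next line.
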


\begin{proof}
 For $j\le i$,  from Lemma \ref{lem-2024-1}, we have that 
\begin{equation}\label{2024-1-12-2}
H g(j,H,\{i\})=-(j-1)!J^{j-1}e^H {\rm Po}(\{i\};H)\, {\rm Po}(N(j-1);H),
\end{equation}
where we have used that $A\cap N(j-1)=\emptyset$, making ${\rm Po}(A\cap N(j-1);H)=0$.
Further, from (\ref{2024-1-12-1}),
\begin{equation}\label{2024-1-12-3}
J^{j-1}e^H {\rm Po}(N(j-1);H)= \sum_{k=0}^{j-1}\frac{J^{j-k-1}}{k!}=\sum_{i=0}^{j-1}\frac{J^{i}}{(j-i-1)!}\ge 0.
\end{equation}
Thus $Hg(j,H,\{i\})$ is negative and
$$j!J^je^H {\rm Po}(N(j);H)=j!J^{i}+
\sum_{i=0}^{j-1}\frac{(j-1)!J^{i}}{(j-i-1)!}
+\sum_{i=0}^{j-1}\frac{i}{j-i}\,\frac{(j-1)!J^{i}}{(j-i-1)!},$$
giving
$$j!J^je^H {\rm Po}(N(j);H)\ge (j-1)!J^{j-1}e^H {\rm Po}(N(j-1);H).$$
Hence proving that $j\mapsto Hg(j,H,\{i\})$ is decreasing.

Finally, taking $i=j$ in (\ref{2024-1-12-2}) gives
$$H g(j,H,\{j\})
=-(j-1)!J^{j-1}e^H\frac{H^{j}e^{-H}}{j!} {\rm Po}(N(j-1);H),$$
 from which (\ref{2024-1-12-9}) follows.
 \qed
 \end{proof}

\begin{lemma}\label{j ge i}
 For fixed $i$ and $j> i$,  we have that the map $j\mapsto Hg(j,H,\{i\})$ is non-negative, decreasing and 
 $$Hg(j,H,\{j-1\})={\rm Po}(\{j,j+1,\dots\};H)=u-{\rm Po}(N(j-1),H).$$
\end{lemma}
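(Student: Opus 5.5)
The plan is to specialise Lemma \ref{lem-2024-1} to $A=\{i\}$ with $j>i$, exactly as Lemma \ref{j<i} did for $j\le i$. The difference is that now $A\cap N(j-1)=\{i\}$ is nonempty.

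\textbf{Step 1: simplify $F$.} Since $j-1\ge i$,
$$F(j-1,H,\{i\})={\rm Po}(i;H)-{\rm Po}(i;H)\,{\rm Po}(N(j-1);H)={\rm Po}(i;H)\,{\rm Po}(\{j,j+1,\dots\};H).$$
This uses ${\rm Po}(\N_0;H)=u$ and countable additivity of $A\mapsto{\rm Po}(A;H)$. The same additivity gives the identity ${\rm Po}(\{j,j+1,\dots\};H)=u-{\rm Po}(N(j-1),H)$ claimed in the statement.

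\textbf{Step 2: multiply by $H$.} Apply Lemma \ref{lem-2024-1} and multiply by $H$. The second term drops out because $H(u-u_H)=0$, and $HJ=u_H$. This gives
$$Hg(j,H,\{i\})=(j-1)!\,J^{j-1}e^H\,{\rm Po}(i;H)\,{\rm Po}(\{j,j+1,\dots\};H).$$

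\textbf{Step 3: non-negativity.} Each factor lies in $R(T)_+$. Here $J\ge0$ by the positivity of canonical partial inverses, and $e^{H}\ge 0$. The product of positive elements of the $f$-algebra $R(T)$ is positive, so $Hg(j,H,\{i\})\ge0$.

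\textbf{Step 4: monotonicity in $j$.} This is the main step. Expand the tail series and use $J^{j-1}H^k=u_H H^{k-j+1}$ for $k\ge j-1$:
$$(j-1)!\,J^{j-1}e^H\,{\rm Po}(\{j,j+1,\dots\};H)=\sum_{m\ge1}\frac{(j-1)!}{(j-1+m)!}\,u_H H^m .$$
The series is an order limit of increasing partial sums of positive terms. Each coefficient $(j-1)!/(j-1+m)!$ is decreasing in $j$, and each $u_HH^m\ge0$. So the whole expression is decreasing in $j$ term by term, and the property passes to the order limit. Multiplying by the fixed positive factor ${\rm Po}(i;H)$ preserves monotonicity.

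The care needed is in justifying the rearrangement of infinite order sums in the universally complete $f$-algebra. Multiplication is order continuous there, so multiplying the series term by term is legitimate.

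\textbf{Step 5: the case $i=j-1$.} Here $(j-1)!\,J^{j-1}e^H\,{\rm Po}(j-1;H)=J^{j-1}H^{j-1}=u_H$, so
$$Hg(j,H,\{j-1\})=u_H\,{\rm Po}(\{j,j+1,\dots\};H).$$
Since $j\ge1$, every term $H^ke^{-H}/k!$ with $k\ge j$ lies in the band of $H$. Hence the factor $u_H$ can be dropped, which gives the stated formula.
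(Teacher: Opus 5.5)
Your proposal is correct and follows the paper's own proof. Both specialise Lemma \ref{lem-2024-1} to $A=\{i\}$ with $j>i$, rewrite $Hg(j,H,\{i\})={\rm Po}(i;H)\sum_{s\ge1}\frac{(j-1)!}{(j+s-1)!}H^s$, read off non-negativity and monotonicity in $j$ from the coefficients, and then evaluate at $i=j-1$; your explicit tracking of the factor $u_H$ is slightly more careful than the paper's write-up (the one harmless slip is that for $j=1$ one has $J^{0}H^{0}=u$ rather than $u_H$, which changes nothing since the tail already lies in the band of $H$).
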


\begin{proof}
 From Lemma \ref{lem-2024-1} with $j> i$  we have 
\begin{equation}\label{2024-15-1}
H g(j,H,\{i\})=(j-1)!J^{j-1}e^H {\rm Po}(i;H)(u-{\rm Po}(N(j-1);H)).
\end{equation}
From (\ref{2024-1-12-1}),
\begin{equation}\label{2024-15-3}
e^H (u-{\rm Po}(N(j-1);H))= e^H-\sum_{k=0}^{j-1}\frac{H^k}{k!}.
\end{equation}
Combining (\ref{2024-15-1}) and (\ref{2024-15-3}) gives
\begin{equation}\label{2024-15-4}
H g(j,H,\{i\})={\rm Po}(i;H) \sum_{k=j}^\infty\frac{(j-1)!H^{k-j-1}}{k!}
={\rm Po}(i;H) \sum_{s=1}^\infty\frac{(j-1)!}{(j+s-1)!}H^s\ge 0.
\end{equation}
 Here, for $s\in\N$,
\begin{eqnarray*}
 \frac{(j-1)!}{(j+s-1)!}=\frac{1}{j\cdot(j+1)\cdot\dots \cdot(j+s-1)},
\end{eqnarray*}
 which is decreasing in $j$. 
 
 Finally, from (\ref{2024-15-4}) and (\ref{2024-1-12-1}),
\begin{eqnarray*}
H g(j,H,\{j-1\})={\rm Po}(j-1;H) \sum_{k=j}^\infty\frac{(j-1)!H^{k-j-1}}{k!}
=e^{-H} \sum_{k=j}^\infty\frac{H^{k}}{k!},
\end{eqnarray*}
as claimed.
 \qed
 \end{proof}

\begin{lemma}\label{delta-bound}
Let $A\subset \N_0$ then
$$-(u-e^{-H})\le \Delta(j,H,A)\le u-e^{-H},$$
where
$$\Delta(j,H,A):=H(g(j+1,H,A)-g(j,H,A))$$ for $A\subset \N_0$.
\end{lemma}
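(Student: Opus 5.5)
Since $A\mapsto g(j,H,A)$ is an $R(T)$-valued measure (Lemma \ref{cor-2024-1}), so is $A\mapsto \Delta(j,H,A)$, and
\[
\Delta(j,H,A)=\sum_{i\in A}\Delta(j,H,\{i\}),
\]
with the sum taken as an order limit. Also $\Delta(j,H,\N_0)=0$. The plan is the classical one: only the singleton $\{j\}$ contributes positively, so $\Delta(j,H,A)\le \Delta(j,H,\{j\})$. Because $\Delta(j,H,A^c)=-\Delta(j,H,A)$, it then suffices to prove the upper bound for every $A$. The lower bound follows by applying the upper bound to $A^c$. The case $j=0$ is handled separately: $g(0,H,A)=0$, so $\Delta(0,H,A)=Hg(1,H,A)=u_H(\nu_A(0)-{\rm Po}(A;H))$ by (\ref{riesz-recurrence-2-1**}), which lies between $-u_H(u-e^{-H})$ and $u_H(u-e^{-H})$, since ${\rm Po}(\{0\};H)=e^{-H}$.

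For $j\ge 1$, the first step is to sign each singleton term.
\begin{itemize}
\item If $i>j$, then $j<j+1\le i$. By Lemma \ref{j<i}, $j\mapsto Hg(j,H,\{i\})$ is decreasing there, so $\Delta(j,H,\{i\})\le 0$.
\item If $i<j$, then $j+1>j>i$. By Lemma \ref{j ge i}, the map is decreasing there too, so $\Delta(j,H,\{i\})\le0$.
\end{itemize}
Hence, using positivity of the order limit, $\Delta(j,H,A)\le \Delta(j,H,\{j\})$ when $j\in A$, and $\Delta(j,H,A)\le 0$ otherwise.

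The second step is to compute $\Delta(j,H,\{j\})$. Lemma \ref{j ge i} gives $Hg(j+1,H,\{j\})=u-{\rm Po}(N(j);H)$, and (\ref{2024-1-12-9}) gives $Hg(j,H,\{j\})=-\frac{H}{j}{\rm Po}(N(j-1);H)$. Therefore
\[
\Delta(j,H,\{j\})=u-{\rm Po}(N(j);H)+\frac{H}{j}{\rm Po}(N(j-1);H).
\]
This must be bounded by $u-e^{-H}$, which amounts to the scalar inequality
\[
\sum_{k=1}^{j}\frac{H^k}{k!}\ \ge\ \frac{1}{j}\sum_{k=0}^{j-1}\frac{H^{k+1}}{k!}.
\]
Termwise, after shifting indices this compares $\frac{H^{k}}{k!}$ with $\frac{k}{j}\frac{H^k}{k!}$ for $1\le k\le j$. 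The inequality holds because $k\le j$ and $H\ge0$ in the $f$-algebra $R(T)$, so $H^k\ge0$.

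The main obstacle is justifying these manipulations in $R(T)$. One has to check that the monotonicity statements of Lemmas \ref{j<i} and \ref{j ge i} are exactly what is needed at the boundary indices, namely $i=j+1$ in Lemma \ref{j<i} and $i=j-1$ in Lemma \ref{j ge i}. One also has to handle the band $u-u_H$. On that band, Lemma \ref{lem-2024-1} together with the fact that $H$ vanishes there gives $(u-u_H)\Delta=0$, and there also $u-e^{-H}=0$, so the bounds hold trivially. Consequently the whole computation can be carried out on the band $P_H$, where $J$ is a genuine inverse of $H$.
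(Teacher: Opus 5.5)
Your proof is correct and is essentially the paper's argument. You use Lemmas \ref{j<i} and \ref{j ge i} to get $\Delta(j,H,\{i\})\le 0$ for $i\ne j$, bound $\Delta(j,H,\{j\})\le u-e^{-H}$ via $\frac{k}{j}\le 1$ (the paper's $\frac{1}{k!\,j}\le\frac{1}{(k+1)!}$), and obtain the lower bound from the measure property together with $\Delta(j,H,\N_0)=0$. Your separate treatment of $j=0$ via (\ref{riesz-recurrence-2-1**}) fills a small gap, since the paper's computation divides by $j$ and tacitly assumes $j\ge 1$.
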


\begin{proof}
Using (\ref{2024-1-12-9}) we obtain
$$Hg(j,H,\{j\})=-\frac{e^{-H}}{j} \sum_{k=0}^{j-1}\frac{H^{k+1}}{k!},$$
while from Lemma \ref{j ge i}, 
 $$Hg(j+1,H,\{j\})=e^{-H}\sum_{k=j+1}^\infty \frac{H^k}{k!}.$$
Thus
$$\Delta(j,H,\{j\})=e^{-H}\left[ \sum_{k=j+1}^\infty\frac{H^{k}}{k!}+\frac{1}{j} \sum_{k=0}^{j-1}\frac{H^{k+1}}{k!}\right],$$
where
$$\frac{1}{k!j}\le \frac{1}{(k+1)!},$$
as $k+1\le j$, giving
\begin{equation}\label{2024-1-16-1}
\Delta(j,H,\{j\})\le e^{-H} \sum_{k=1}^\infty\frac{H^{k}}{k!}=u-e^{-H}.
\end{equation}

Lemmas \ref{j ge i} and \ref{j<i} give that
\begin{equation}\label{2024-1-16-5}
\Delta(j,H,\{i\})\le 0,
\end{equation}
for all $i\ne j$.
Now as $g(j,H,\cdot)$ is an $R(T)$-valued measure, so is $\Delta(j,H,\cdot)$, thus from (\ref{2024-1-16-5}) we have
$$\Delta(j,H,A)\le 
\Delta(j,H,\{j\})\le u-e^{-H}.$$

From Lemma \ref{cor-2024-1},
$\Delta(j,H,\N_0)=0$, so $$u-e^{-H}\ge\Delta(j,H,\N_0\setminus A)=-\Delta(j,H,A)$$ giving
$-u+e^{-H}\le \Delta(j,H,A).$
\qed
\end{proof}

As $w$ is a sum of components of $u$, it can be expressed as 
\begin{equation}\label{2024-1-17-3}
w=\sum_{j=0}^n jr_j.
\end{equation}
Here $r_j$ are disjoint components of $u$ summing to $u$.
Taking our lead from functional calculus, see \cite{g-fcal},  as $g(j,H,A)\in R(T)$, we define
$r_j g(w,H,A)= g(j,H,A) r_j$ for each $j$, giving
\begin{equation}\label{2024-1-17-4}
g(w,H,A):=\sum_{j=0}^n r_j g(j,H,A).
\end{equation}
As $r_j$ need not be in $R(T)$, for each $j=0,\dots,n$, we have that  $g(w,H,A)$ need not be in $R(T)$ .

Up until this point we have only need the $R(T)$-module structure of $L^1(T)$ to ensure the existence of products.  Beyond this point in the paper we also use that $L^1(T)$ is an $E_u$-module, see \cite{expectation}, which is a special case of $L^1(T)$ being an $L^\infty(T)$-module, see \cite{KRW}. Further this multipication is commutative and order continuous.

\begin{lemma}\label{lem-2024-17-1}
As $w$ is a finite sum of components of $u$, represented as in (\ref{2024-1-17-3}),  with $A\subset \N_0$ and $g$ as in (\ref{2024-1-17-4}) we have that  
\begin{equation}\label{2024-1-17-6} 
g(w+ku,H,A)=\sum_{j=0}^n r_jg(j+k,H,A),
\end{equation}
for $k\in\N_0$.
Further, if $q$ is a component of $u$, then 
\begin{equation}\label{2024-1-17-8}
qg(w,H,A)=g(qw,H,A).
\end{equation}
\end{lemma}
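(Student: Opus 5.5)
Both identities follow from the definition (\ref{2024-1-17-4}) once we know how to represent $w+ku$ and $qw$ in the form (\ref{2024-1-17-3}). The plan is to write down these representations explicitly and substitute. The facts used throughout are that $r_0,\dots,r_n$ are pairwise disjoint components of $u$ with $\sum_j r_j=u$, that products of components of $u$ are their infima ($pq=p\wedge q$), and that multiplication on $L^1(T)$ by elements of $E_u$ is commutative and associative.

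For (\ref{2024-1-17-6}), since $u=\sum_{j=0}^n r_j$ we have
$$w+ku=\sum_{j=0}^n (j+k)r_j=\sum_{m=0}^{n+k} m\, r'_m,$$
where $r'_m=r_{m-k}$ for $k\le m\le n+k$ and $r'_m=0$ for $0\le m<k$. The $r'_m$ are again pairwise disjoint components of $u$ summing to $u$, so this is a representation of the form (\ref{2024-1-17-3}) with $n+k$ in place of $n$. Applying the definition (\ref{2024-1-17-4}) to it gives
$$g(w+ku,H,A)=\sum_m r'_m g(m,H,A)=\sum_{j=0}^n r_j g(j+k,H,A),$$
since the terms with $r'_m=0$ vanish.

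One point needs care here: $g(\cdot,H,A)$ must be well defined, i.e.\ independent of the chosen representation. The representation is unique up to adding zero components, because $r_j=u-u_{|w-ju|}$, exactly as in the proof of Lemma \ref{pieces}. Adding zero components does not change the sum in (\ref{2024-1-17-4}). I expect this uniqueness remark to be the only real subtlety in the argument.

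For (\ref{2024-1-17-8}), let $q$ be a component of $u$. Then
$$qw=\sum_j j\,(qr_j)=\sum_{j\ge1} j\,(qr_j)+0\cdot\bigl(qr_0+(u-q)\bigr).$$
The new pieces are $s_j=qr_j$ for $j\ge1$ and $s_0=qr_0+(u-q)$. These are components of $u$: for $s_0$ this holds because $qr_0\le q$ is disjoint from $u-q$. They are pairwise disjoint and sum to $u$. Hence, by (\ref{2024-1-17-4}),
$$g(qw,H,A)=\sum_{j\ge1} q r_j g(j,H,A)+\bigl(qr_0+u-q\bigr)g(0,H,A).$$
Now $g(0,H,A)=0$ by definition, so the last term vanishes. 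By the same token $r_0g(0,H,A)=0$, so we may reinsert the $j=0$ term in the first sum. The right-hand side is then $q\sum_j r_j g(j,H,A)=q\,g(w,H,A)$, using commutativity and associativity of the multiplication. This is precisely why the normalisation $g(0,H,A)=0$ is needed.
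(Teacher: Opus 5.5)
Your proof is correct and follows the paper's argument. The paper likewise rewrites $w+ku$ and $qw$ in the form (\ref{2024-1-17-3}), with your zero piece $qr_0+(u-q)$ being its $r_0\vee(u-q)$, applies the definition (\ref{2024-1-17-4}), and uses $g(0,H,A)=0$; your uniqueness remark via $r_j=u-u_{|w-ju|}$ usefully makes explicit the well-definedness that the paper leaves tacit.
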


\begin{proof}
With the notation  set as in the statement of the lemma, 
from (\ref{2024-1-17-3}),
$$w+ku=\sum_{j=0}^n (j+k)r_j.$$
It now follows from (\ref{2024-1-17-3})  and (\ref{2024-1-17-4}) that (\ref{2024-1-17-6}) holds.

From (\ref{2024-1-17-3}),
$$qw=\sum_{j=1}^n j(qr_j)+0(r_0 (u-q)).$$
Hence, from (\ref{2024-1-17-4}),
\begin{eqnarray*}
g(qw,H,A)&=&\sum_{j=1}^n (qr_j)g(j,H,A)+(r_0\vee (u-q))g(0,H,A)\\
&=&\sum_{j=0}^n (qr_j)g(j,H,A)+(u-q)g(0,H,A)\\
&=&qg(w,H,A)+(u-q)g(0,H,A).
\end{eqnarray*}
However, from the definition of $g$, $g(0,H,A)=0$, from which (\ref{2024-1-17-8}) follows.
\qed
\end{proof}

For $w$ a sum of components of $u$, represented as (\ref{2024-1-17-3}), we define the $T$-conditional probability of $w$ taking on components of $u$ times values in $A\subset \N_0$ by
\begin{equation}\label{2024-1-19-1}
\mathbb{P}_T[w\in A]: =\sum_{j\in A} Tr_j\in R(T),
\end{equation}
 and shall refer to it as the  $T$-conditional probability of $w\in A$.

\begin{lemma}\label{rec-2024-1}
For $w$ a sum of components of $u$, represented as in (\ref{2024-1-17-3}),
 and $H=Tw$, we have
 \begin{equation}\label{riesz-recurrence-5}
T(H g(w+u,H,A)-wg(w,H,A))=- {\rm Po}(A;H)+\mathbb{P}_T[w\in A],
\end{equation}
for each $A\subset \N_0$.
\end{lemma}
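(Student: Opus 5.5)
Expand both terms via the representation (\ref{2024-1-17-3}) and Lemma \ref{lem-2024-17-1}, then reduce everything to the scalar-style recurrence (\ref{2024-1-1}) applied componentwise. Since $r_j$ are disjoint components of $u$ summing to $u$, we have $wr_j=jr_j$. By (\ref{2024-1-17-6}) with $k=1$,
\[
Hg(w+u,H,A)-wg(w,H,A)=\sum_{j=0}^n r_j\bigl(Hg(j+1,H,A)-jg(j,H,A)\bigr),
\]
where the products make sense because $g(j,H,A)\in R(T)$ and $L^1(T)$ is an $R(T)$-module.

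Next, I would establish the pointwise identity, valid in $R(T)$ for each $j\in\N_0$:
\begin{equation*}
Hg(j+1,H,A)-jg(j,H,A)=u_H\bigl(\nu_A(j)-{\rm Po}(A;H)\bigr)-(u-u_H)\,jg(j,H,A).
\end{equation*}
To see this, multiply (\ref{2024-1-1}) with index $j+1$ by $H$, using $HJ=u_H$ and $H(u-u_H)=0$. Then split $jg(j,H,A)=u_Hjg(j,H,A)+(u-u_H)jg(j,H,A)$. On the band of $u-u_H$, (\ref{2024-1-3}) gives
\[
(u-u_H)jg(j,H,A)=(u-u_H)\bigl({\rm Po}(A;H)-\nu_A(j)\bigr)
\]
for $j\ge1$; for $j=0$ this expression also vanishes, since $(u-u_H)({\rm Po}(A;H)-\nu_A(0))=0$ by (\ref{dagger}). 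Combining the two bands, the right side becomes simply $\nu_A(j)-{\rm Po}(A;H)$ for every $j$. This is the main point to check carefully, particularly the $j=0$ case and the role of $u-u_H$ in (\ref{2024-1-1}).

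Consequently,
\[
Hg(w+u,H,A)-wg(w,H,A)=\sum_{j=0}^n r_j\bigl(\nu_A(j)-{\rm Po}(A;H)\bigr)=\sum_{j\in A\cap N(n)}r_j-{\rm Po}(A;H),
\]
using $\sum_j r_j=u$ and $r_j\nu_A(j)=r_j$ or $0$ according as $j\in A$ or not. Finally, apply $T$. Since ${\rm Po}(A;H)\in R(T)$ and $Tu=u$, the averaging property gives $T({\rm Po}(A;H))={\rm Po}(A;H)$. Linearity gives $T\sum_{j\in A}r_j=\mathbb{P}_T[w\in A]$ by (\ref{2024-1-19-1}), noting $r_j=0$ for $j>n$. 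This yields (\ref{riesz-recurrence-5}).

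The main obstacle is the band bookkeeping on $u-u_H$. There, the $J$-term vanishes and the correction term of (\ref{2024-1-1}) must exactly supply the missing $\nu_A(j)-{\rm Po}(A;H)$. I would also remark that on that band $w=0$, since $Tw=H$ and Lemma \ref{lem-bands} give $P_w\le P_H$. Hence only $r_0$ lives there, which gives an alternative, more direct check of that case.
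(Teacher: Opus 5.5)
Your proof is correct, but it handles the band of $u-u_H$ differently from the paper. The paper multiplies (\ref{2024-1-1}) by $H$, which kills the correction term, and so uses only $Hr_jg(j+1,H,A)=u_Hr_j\bigl(jg(j,H,A)+\nu_A(j)-{\rm Po}(A;H)\bigr)$, an identity that carries information only on $u_H$. It then uses $H=Tw$ and Lemma \ref{lem-bands} to get $r_j\le u_{Tr_j}\le u_H$ for $j\ge 1$. Hence $u-u_H=(u-u_H)r_0$ and $wu_H=w$, and the leftover $(u-u_H)\nu_A(0)$ is matched with $(u-u_H){\rm Po}(A;H)$ via (\ref{dagger}).

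You instead show that the Stein identity $Hg(j+1,H,A)-jg(j,H,A)=\nu_A(j)-{\rm Po}(A;H)$ holds in all of $R(T)$ for every $j\in\N_0$. The correction term $\frac{1}{j}(u-u_H)(\nu_A(0)-\nu_A(j))$ in (\ref{2024-1-1}) is exactly what enforces it on the band where $H$ vanishes, as your use of (\ref{2024-1-3}) for $j\ge 1$ and (\ref{dagger}) for $j=0$ shows. Your route therefore proves the lemma for an arbitrary $H\in R(T)_+$, with no link between $H$ and $w$ and no appeal to Lemma \ref{lem-bands}. This mirrors the classical fact that $\mathbb{E}[\lambda g(W+1)-Wg(W)]=\mathbb{P}[W\in A]-{\rm Po}(A;\lambda)$ for every $\lambda$; the choice $H=Tw$ is only needed later for the error bound. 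The paper's route, by exploiting $H=Tw$, shows instead that the correction term in (\ref{2024-1-1}) plays no role in this particular identity.

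One minor point of justification: evaluating $wg(w,H,A)=\sum_j jr_jg(j,H,A)$ uses products of components, $r_ir_j=\delta_{ij}r_j$. That is the $E_u$-module structure the paper invokes before Lemma \ref{lem-2024-17-1}, not just the $R(T)$-module structure you cite.
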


\begin{proof}
From (\ref{2024-1-1}), with $j$ replaced by $j+1$,  and multiplied by $Hr_j$ we obtain
\begin{equation}\label{riesz-recurrence-2}
Hr_jg(j+1,H,A)=u_Hr_j(jg(j,H,A)- {\rm Po}(A;H)+\nu_A(j)).
\end{equation}
We note that 
$r_j w=jr_j,$ $r_jg(w,H,A)=r_jg(j,H,A)$ and $r_jg(w+u,H,A)=r_jg(j+1,H,A)$ so from (\ref{riesz-recurrence-2}) we get
\begin{equation}\label{riesz-recurrence-3}
Hr_jg(w+u,H,A)=u_Hr_j(w g(j,H,A)+\nu_A(j)-{\rm Po}(A;H)).
\end{equation}
Further to this,  for $j\ge 1$, we have that $0\le r_j\le w$ and thus, from 
Lemma \ref{lem-bands},  $r_j=u_{r_j}\le u_{Tr_j}\le u_{Tw}=u_H,$ giving $r_ju_H=r_j$ and $wu_H=w$.

From the above we have
$$u-r_0=\sum_{j=1}^n r_j\le u_H,$$
so for $j=0$ we have that
$(u-r_0)u_H=u-r_0$ which can be rearranged to give
$u-u_H=r_0(u-u_H)$.
Thus
 $r_0=r_0u_H+(u-u_H)r_0=r_0u_H+(u-u_H)$.
 Hence
 $$\sum_{j\in A}r_j=(u-u_H)\nu_A(0)+u_H\sum_{j\in A}r_j.$$

So summing (\ref{riesz-recurrence-2}) over $j=0,\dots, n$, gives
\begin{equation}\label{riesz-recurrence-3-1}
Hg(w+u,H,A)=w g(w,H,A)+\sum_{j\in A}r_j-(u-u_H)\nu_A(0)-u_H{\rm Po}(A;H).
\end{equation}

By (\ref{dagger}),
$(u-u_H){\rm Po}(A;H)=\nu_A(0)(u-u_H)$,
so from (\ref{riesz-recurrence-3-1}) we get
\begin{equation}\label{riesz-recurrence-3-3}
Hg(w+u,H,A)=w g(w,H,A)+\sum_{j\in A}r_j-{\rm Po}(A;H).
\end{equation}
Applying $T$ to (\ref{riesz-recurrence-3-3}) and using (\ref{2024-1-19-1})
 yields the result of the lemma.
\qed
\end{proof}

\begin{lemma}\label{lem-2024-indep}
Let $q_1,\dots,q_n$ be $T$-conditionally independent components of $u$, $A\subset \N_0$, 
$\displaystyle{w=\sum_{j=0}^n q_j}$, $\displaystyle{w_i=\sum_{j\ne i} q_j}$, $H=Tw$ and $h_j=Tq_j$ for each $j=0,\dots,n$, then we have the $T$-conditional independence relation 
\begin{equation}\label{t-con-indep-2024}
T(q_ig(w_i+ku,H,A))=(Tq_i)Tg(w_i+ku,H,A).
\end{equation}
and
\begin{equation}\label{indep}
T(q_i g(w+ku,H,A))=h_i Tg(w_i+(k+1)u,H,A),
\end{equation}
for each $k\in \N_0$ and $i\in\{0,\dots,n\}$.
\end{lemma}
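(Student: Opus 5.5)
We first establish (\ref{t-con-indep-2024}) and then derive (\ref{indep}) from it together with Lemma \ref{lem-2024-17-1}.

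\textbf{Proof of (\ref{t-con-indep-2024}).} Write $w_i=\sum_{j=0}^{n} j\,s_j$ as in Lemma \ref{pieces}. Here the $s_j$ are disjoint components of $u$ summing to $u$. Each $s_j$ is a finite lattice/algebra combination of the $q_l$, $l\ne i$. Concretely, $s_j$ is the sum, over subsets $S\subset\{0,\dots,n\}\setminus\{i\}$ with $|S|=j$, of the products $\prod_{l\in S}q_l\prod_{l\notin S, l\ne i}(u-q_l)$. Hence $s_j$ lies in the closed Riesz subspace $E_{\Lambda_2}$ generated by $R(T)$ and $\{q_l: l\ne i\}$. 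By Lemma \ref{lem-2024-17-1},
\begin{equation*}
g(w_i+ku,H,A)=\sum_{j} s_j\, g(j+k,H,A),
\end{equation*}
with coefficients $g(j+k,H,A)\in R(T)$. By the averaging property,
\begin{equation*}
T(q_i s_j\, g(j+k,H,A))=g(j+k,H,A)\,T(q_is_j).
\end{equation*}
Definition \ref{def-n}, applied with $\Lambda_1=\{i\}$ and $\Lambda_2$ the rest, gives the $T$-conditional independence of $q_i$ and the component $s_j$. Lemma \ref{lem-ind-comp-1} then yields $T(q_is_j)=Tq_i\,Ts_j$. Summing over $j$ and using the averaging property again gives
\begin{equation*}
T(q_ig(w_i+ku,H,A))=h_i\sum_j g(j+k,H,A)\,Ts_j=h_i\,T g(w_i+ku,H,A).
\end{equation*}
The sum is finite, so no order-continuity issue arises here.

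\textbf{Proof of (\ref{indep}).} Since $q_i$ is a component of $u$, $q_i w=q_i(w_i+u)$; that is, on the band of $q_i$ the element $w$ coincides with $w_i+u$. Using the representation of $w_i$, we get
\begin{equation*}
w+ku=\sum_j j\,s_j+q_i+ku,
\end{equation*}
so
\begin{equation*}
q_i(w+ku)=\sum_j (j+k+1)\,q_is_j=q_i(w_i+(k+1)u).
\end{equation*}
We then want $q_i\,g(w+ku,H,A)=q_i\,g(w_i+(k+1)u,H,A)$. This follows from the functional-calculus definition (\ref{2024-1-17-4}). Write $w+ku$ in its canonical form with pieces $r'_m$. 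Since $q_ir'_m=\sum_{j:\,j+k+1=m}q_is_j$, multiplying by $q_i$ gives
\begin{equation*}
q_i\,g(w+ku,H,A)=\sum_j q_is_j\,g(j+k+1,H,A)=q_i\,g(w_i+(k+1)u,H,A).
\end{equation*}
This is the localization argument already used in the proof of (\ref{2024-1-17-8}). Applying $T$ and then (\ref{t-con-indep-2024}) with $k+1$ in place of $k$ gives (\ref{indep}).

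\textbf{Main obstacle.} The delicate step is justifying that each $s_j$ (a product of $q_l$'s and $(u-q_l)$'s) is $T$-conditionally independent of $q_i$. Definition \ref{def-n} is phrased in terms of closed Riesz subspaces, so I would check that the components of $u$ in $E_{\Lambda_2}$ include such products. These are infima of components, since $pq=p\wedge q$ and $u-q_l$ is a component. Lemma \ref{lem-ind-comp-1} then converts the subspace independence into the multiplicative identity. A secondary point is the implicit convention that $\kappa$ is large enough, $j$ up to $n$, so the functional-calculus representation covers all values.
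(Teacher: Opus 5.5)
Your proposal is correct and follows essentially the same route as the paper. You localize on $q_i$ to replace $w+ku$ by $w_i+(k+1)u$ (the paper cites (\ref{2024-1-17-8}), you verify the localization directly), expand $g(w_i+ku,H,A)$ over the pieces $s_j$ of $w_i$ via Lemma \ref{lem-2024-17-1}, and combine the averaging property with $T(q_is_j)=(Tq_i)(Ts_j)$ from conditional independence; your explicit description of each $s_j$ as a sum of products of the $q_l$ and $u-q_l$, $l\ne i$, usefully substantiates the paper's one-line claim that these pieces lie in the closed Riesz subspace generated by $R(T)$ and $\{q_l : l\ne i\}$.
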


\begin{proof}
As $w=w_i+q_i$, $q_iq_i=q_i$ and $q_iu=q_i$ we have
\begin{equation}\label{eq-main-2}
q_i (w+ku)=q_i(w_i+q_i+ku)=q_iw_i+(k+1)q_i=q_i(w_i+(k+1)u),
\end{equation}
for $k\in \N_0$.
Combining (\ref{eq-main-2}) with (\ref{2024-1-17-8}) gives
$$q_ig(w+ku,H,A)=g(q_i(w+ku),H,A)=g(q_i(w_i+(k+1)u),H,A)=q_ig(w_i+(k+1)u,H,A),$$
and thus
\begin{equation}\label{eq-main-3}
T(q_ig(w+ku,H,A))=T(q_ig(w_i+(k+1)u,H,A)).
 \end{equation}

Now $w_i$ can be represented as  
 $$w_i=\sum_{j=0}^n jr_j^i,$$ 
 where $r_j^i$ for $j=0,\dots,n$ are disjoint components of $u$ summing to $u$.
 Applying Lemma \ref{lem-2024-17-1} to $w_i$ we have
\begin{equation}\label{2024-1-17-6-h} 
g(w_i+ku,H,A)=\sum_{j=0}^n r^i_jg(j+k,H,A),
\end{equation}
for $k\in\N_0$.
Using that $g(j+k,H,A)\in R(T)$, the averaging property of $T$ and (\ref{2024-1-17-6-h}) we obtain
\begin{equation}\label{eq-inped-17}
T(q_ig(w_i+ku,H,A))=\sum_{j=0}^n T(q_ir^i_j)g(j+k,H,A),
 \end{equation}
Here $\{r^i_0,\dots,r^i_n\}$ are in the order closed Riesz subspace of $E$ generated by $R(T)$ and $q_j$, $j\ne i$.
By the $T$ conditional independence of $\{q_1,\dots,q_n\}$ we have that each $r_j^i$ is conditional independent of $q_i$ and thus
\begin{equation}\label{2024-lem-indep-5}
T(q_ir^i_j)=(Tq_i)(Tr^i_j),
\end{equation}
for each $i$ and $j$.
Combining (\ref{eq-inped-17}), (\ref{2024-lem-indep-5}) and the averaging property of $T$ we get
\begin{equation*}
T(q_ig(w_i+ku,H,A))=(Tq_i)\sum_{j=0}^n T(r^i_j)g(j+k,H,A)=(Tq_i)T\left(\sum_{j=0}^n r^i_jg(j+k,H,A)\right),
\end{equation*}
giving (\ref{t-con-indep-2024}).
Combining (\ref{t-con-indep-2024}) and (\ref{eq-main-3}) gives (\ref{indep}).
\qed
\end{proof}

We are now in a position to prove our first main theorem.

\begin{theorem}[Finite sum law of small numbers]\label{thm-main}
Let $q_1,\dots,q_n$ be $T$-conditionally independent components of $u$, $A\subset \N_0$, $\displaystyle{w=\sum_{j=0}^n q_j}$, $H=Tw$ and $h_j=Tq_j$ for each $j=0,\dots,n$, then 
\begin{equation}\label{eq-main-1}
|\mathbb{P}_T[w\in A]- {\rm Po}(A;H)|\le \sup_{i=0,\dots,n} h_i.
\end{equation}
\end{theorem}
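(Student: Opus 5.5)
Following the classical Stein–Chen argument, the plan is to start from Lemma \ref{rec-2024-1}:
$$\mathbb{P}_T[w\in A]-{\rm Po}(A;H)=T\big(Hg(w+u,H,A)-wg(w,H,A)\big),$$
and rewrite the right-hand side with the independence relation of Lemma \ref{lem-2024-indep}. Since $H=\sum_i h_i$ lies in $R(T)$, the averaging property gives $T(Hg(w+u,H,A))=\sum_i h_i\,T g(w+u,H,A)$. Since $w=\sum_i q_i$, taking $k=0$ in (\ref{indep}) gives $T(wg(w,H,A))=\sum_i T(q_ig(w,H,A))=\sum_i h_i\,Tg(w_i+u,H,A)$. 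Hence
$$\mathbb{P}_T[w\in A]-{\rm Po}(A;H)=\sum_i h_i\,T\big(g(w+u,H,A)-g(w_i+u,H,A)\big).$$

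Next, rewrite the difference $g(w+u,H,A)-g(w_i+u,H,A)$. Since $w=w_i+q_i$, on the band of $u-q_i$ the two arguments agree, so by (\ref{2024-1-17-8}) the difference equals $q_i\big(g(w_i+2u,H,A)-g(w_i+u,H,A)\big)$. Expanding with (\ref{2024-1-17-6}), where $w_i=\sum_j jr^i_j$, this becomes $q_i\sum_j r^i_j\big(g(j+2,H,A)-g(j+1,H,A)\big)$. Multiplying by $H$ and using Lemma \ref{delta-bound}, each term $r^i_j\,\Delta(j+1,H,A)$ is bounded in modulus by $r^i_j(u-e^{-H})$. Therefore
$$\big|H\,q_i(g(w+u)-g(w_i+u))\big|\le q_i(u-e^{-H})\le q_i .$$
Applying $T$, and using (\ref{t-con-indep-2024}) or the averaging property, gives
$$\big|T\,H(g(w+u)-g(w_i+u))\big|\le h_i(u-e^{-H})\le h_i .$$

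The main obstacle is that Lemma \ref{delta-bound} controls $H\Delta g$, not $\Delta g$ itself, while the sum above carries the weight $h_i$ rather than $H$. To handle this, factor out $J$. Set $D_i:=T\big(g(w+u)-g(w_i+u)\big)$. Then $u_HD_i=J\cdot HD_i$, and $|HD_i|\le h_i$. Consequently
$$\Big|\sum_i h_iD_i\Big|=\Big|J\sum_i h_i\,(HD_i)\Big|\le J\sum_i h_i^2\le J\,(\sup_i h_i)\,H=u_H\sup_i h_i\le \sup_i h_i .$$
Here $\sup_i h_i\in R(T)_+$, and we used $\sum_i h_i^2\le (\sup_i h_i)\sum_i h_i$, which holds by positivity of multiplication in the $f$-algebra.

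It remains to handle the band of $u-u_H$. There $h_i=0$ for all $i$, since $0\le h_i\le H$, so the sum vanishes. This is consistent with the left-hand side: on that band $r_0\ge u-u_H$, so $\mathbb{P}_T[w\in A]$ and ${\rm Po}(A;H)$ both reduce to $\nu_A(0)$ times $u-u_H$, by (\ref{dagger}). Multiplying by $J$ is legitimate because $J\ge0$ and multiplication by a positive element preserves order and moduli. The care needed here is to justify that every product involved is well defined in $L^1(T)$, using the $R(T)$- and $E_u$-module structure noted before Lemma \ref{lem-2024-17-1}.
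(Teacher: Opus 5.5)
Your proof is correct and follows the paper's route. Lemma~\ref{rec-2024-1} and (\ref{indep}) give $\mathbb{P}_T[w\in A]-{\rm Po}(A;H)=\sum_i h_i\,T(q_iX_i)$ with $X_i=g(w_i+2u,H,A)-g(w_i+u,H,A)$, and Lemma~\ref{delta-bound} then controls $HX_i$. The differences are only in the final bookkeeping. You bound $|T(Hq_iX_i)|\le T(q_i(u-e^{-H}))=h_i(u-e^{-H})$ by positivity and averaging rather than re-invoking (\ref{t-con-indep-2024}) for $k=1,2$, so you use conditional independence only once, as in the classical proof. You also remove the factor $H$ with the partial inverse $J$ and the fact $h_i=u_Hh_i$, whereas the paper uses $\sum_i h_i^2T|X_i|\le(\sup_j h_j)\,H\max_i T|X_i|$.
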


\begin{proof}
From Lemma \ref{lem-2024-indep},
$$T(q_i\cdot g(w+ku,H,A))=h_i\cdot Tg(w_i+(k+1)u,H,A),$$
for each $k\in \N_0$ and $i\in\{0,\dots,n\}$.
Summing the above equation from $i=0$ to $i=n$ gives
\begin{equation}\label{main-2024-11}
T(w\cdot g(w+ku,H,A))=\sum_{i=0}^n h_i\cdot Tg(w_i+(k+1)u,H,A).
\end{equation}
Lemma \ref{rec-2024-1} together with (\ref{main-2024-11}), for $k=0$, gives
\begin{equation}\label{2024-bound-1}
\mathbb{P}_T[w\in A]- {\rm Po}(A;H)=\sum_{i=0}^n h_iT( g(w+u,H,A)-g(w_i+u,H,A)),
\end{equation}
for each $A\subset \N_0$.

 Since $q_i(w+u)=q_i(w_i+2u)$ and $(u-q_i)(w+u)=(u-q_i)(w_i+u),$ by Lemma \ref{lem-2024-17-1}, we have
 \begin{eqnarray*}
g(w+u,H,A)&=&q_ig(w+u,H,A)+(u-q_i)g(w+u,H,A)\\
&=&g(q_i(w+u),H,A)+g((u-q_i)(w+u),H,A)\\
&=&g(q_i(w_i+2u),H,A)+g((u-q_i)(w_i+u),H,A)\\
&=&q_ig(w_i+2u,H,A)+(u-q_i)g(w_i+u,H,A)\\
&=&q_i(g(w_i+2u,H,A)-g(w_i+u,H,A))+g(w_i+u,H,A)).
\end{eqnarray*}
Now applying $T$ to the above and using (\ref{t-con-indep-2024}) with $k=1$ and $k=2$, we obtain
 \begin{equation}\label{eq-main-2024-12-T}
Tg(w+u,H,A)=h_iTg(w_i+2u,H,A)-h_iTg(w_i+u,H,A)+Tg(w_i+u,H,A)).
\end{equation}
But
\begin{equation}\label{2025-april-2}
\sum_{i=0}^n h_i^2T|g(w_i+2u,H,A)-g(w_i+u,H,A)|\le
H \sup_{j=0,\dots,n}h_j T|g(w_i+2u,H,A)-g(w_i+u,H,A)|.
\end{equation}

Combining (\ref{2024-1-17-6}) with (\ref{2024-1-17-6-h}) yields
$$H|g(w_i+2u,H,A)-g(w_i+u,H,A)|\le \sum_{j=0}^n r^i_j H|g(j+2,H,A)-g(j+1,H,A)|
= \sum_{j=0}^n r^i_j |\Delta(j+1,H,A)|.$$
So by Corollary \ref{cor-2024-1} we have
\begin{equation}\label{2025-april-1}
H|g(w_i+2u,H,A)-g(w_i+u,H,A)|\le\sum_{j=0}^n r^i_j (u-e^{-H})=u-e^{-H}.
\end{equation}
Combining (\ref{2024-bound-1}, (\ref{eq-main-2024-12-T}) and  (\ref{2025-april-1}) gives
\begin{equation}\label{2024-bound-2}
|\mathbb{P}_T[w\in A]- {\rm Po}(A;H)|=\sup_{j=0,\dots,n}h_j(u-e^{-H}),
\end{equation}
from which the theorem follows.
\qed
\end{proof}

Our last result extends the above bound to the case of infinite sequences.
For this we require the following lemma on convergence.
\begin{lemma}\label{lem-conv-dist}
Let  
$(q_j)$ be a sequence of components of $u$, with  
$$s:=\sum_{j=0}^\infty q_j\in L^1(T).$$
Denote
$$s_n:=\sum_{j=0}^n q_j,$$
 then
\begin{equation}\label{conv-dist}
\mathbb{P}_T[s_n\in A]
\to 
\mathbb{P}_T[s\in A],
\end{equation}
in order as $n\to\infty$,
 for each $A\subset \N_0$.
\end{lemma}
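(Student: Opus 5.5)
Since $s$ and $s_n$ are sums of components of $u$, I would first build their spectral representations $s_n=\sum_j j r^n_j$ and $s=\sum_j j r_j$. Here $r^n_j$ and $r_j$ are the components of $u$ on which $s_n$, respectively $s$, takes the value $j$. For $s$ the sum is countable ($\kappa=\infty$ in Lemma \ref{pieces}), and the $r_j$ are disjoint with order sum $u$. Concretely, $r_j=u-u_{|s-ju|}$ and $r^n_j=u-u_{|s_n-ju|}$. By (\ref{2024-1-19-1}) we then have $\mathbb{P}_T[s_n\in A]=T\big(\sum_{j\in A}r^n_j\big)$ and $\mathbb{P}_T[s\in A]=T\big(\sum_{j\in A}r_j\big)$. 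Because $T$ is order continuous and $0\le \sum_{j\in A} r^n_j\le u$, it suffices to show that $p^n_A:=\sum_{j\in A}r^n_j\to p_A:=\sum_{j\in A}r_j$ in order in $E$.

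Next I would decompose $u$ into ``where the series has settled''. For each $m$, let $v_m:=u-u_{s-s_m}$, the component of $u$ on which all $q_j$ with $j>m$ vanish. Since $s-s_m\downarrow 0$ in order (as $s\in L^1(T)$ is the order limit of the partial sums), $u_{s-s_m}\downarrow$. I claim $u_{s-s_m}\downarrow 0$. This is the key point. Each $q_j$ is a component of $u$, so $s-s_m\ge u_{s-s_m}$: the tail is a sum of components and is at least $1\cdot u$ on its own support. Hence $u_{s-s_m}\le s-s_m\downarrow 0$, so $v_m\uparrow u$. On $v_m$, for all $n\ge m$, we get $v_m s_n=v_m s$, and therefore $v_m r^n_j=v_m r_j$ for every $j$. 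This follows from Lemma \ref{lem-2024-17-1}-style functional calculus: multiplication by a component commutes with taking level-set components. In particular $v_m p^n_A=v_m p_A$ for all $n\ge m$.

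For the order convergence, note that $|p^n_A-p_A|\le u$ since both lie between $0$ and $u$. Then for $n\ge m$,
\[
|p^n_A-p_A|=(u-v_m)|p^n_A-p_A|\le u-v_m=u_{s-s_m}\downarrow 0 .
\]
So $p^n_A\to p_A$ in order, and by order continuity of $T$ we get $T p^n_A\to Tp_A$ in order, which is (\ref{conv-dist}).

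I expect the main obstacle to be justifying $u_{s-s_m}\le s-s_m$, together with the identity $v_m r^n_j=v_m r_j$, rigorously in the $f$-algebra/band-projection language. For the first, I would write $s-s_m=\sum_{j>m}q_j\ge\sup_{j>m}q_j$ and observe that a supremum of components of $u$ is the component $u_{\sup_{j>m} q_j}=u_{s-s_m}$. For the second, I would note that $P_{v_m}$ commutes with the band projections $P_{|s_n-ju|}$ and $P_{|s-ju|}$, and that $P_{v_m}(s_n-ju)=P_{v_m}(s-ju)$.
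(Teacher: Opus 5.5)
Your proof is correct. Its central object coincides with the paper's: your $v_n=u-u_{s-s_n}$ is the component of $u$ on which $s_n=s$, which is what the paper's $u_n=\sum_j r_jr^n_j$ is meant to be. In both proofs the difference $|\mathbb{P}_T[s_n\in A]-\mathbb{P}_T[s\in A]|$ is controlled by $T$ of the complementary component.

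The routes differ in how that complement is shown to vanish. The paper writes $u-u_n=P_{((s-s_n)-\frac{1}{2}u)^+}u$, notes that $s_n\to s$ $T$-strongly, and cites \cite[Lemma 5.3]{AKRW} (convergence in $T$-conditional probability) to get $T(u-u_n)\to 0$. You instead use the direct inequality $u_{s-s_m}=\sup_{j>m}q_j\le s-s_m$, so $u-v_m\downarrow 0$ already in $E$. The only detail to spell out is that $s-s_m$ lies in the band generated by $\sup_{j>m}q_j$. This holds because $s-s_m$ is an order limit of finite sums lying in that band, and bands are order closed.

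Your route gives a self-contained proof and a stronger conclusion, namely $\sum_{j\in A}r^n_j\to\sum_{j\in A}r_j$ in order in $E$ before $T$ is applied. Your index-free definition of $v_n$ also avoids a slip in the paper. The paper indexes $s_n$ only up to $n$, but $s_n$ can take the value $n+1$. On the component where $s=s_n=n+1$, the paper's inequality $(u-u_n)(s-s_n)\ge u-u_n$ fails. The same device also justifies the countable representation $s=\sum_j jr_j$ that both proofs take for granted, since $s=s_m$ on $v_m$ and $v_m\uparrow u$. The paper's route, in exchange, places the lemma within the existing theory of $T$-conditional modes of convergence.
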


\begin{proof}
We being by expressing $s$ and $s_n$ in terms of disjoint components of $u$ as
\begin{eqnarray*}
 s&=&\sum_{j=0}^\infty j r_j,  \mbox{ where } u=\sum_{j=0}^\infty  r_j, \\
 s_n&=&\sum_{j=0}^n j r_j^n  \mbox{ where } u=\sum_{j=0}^n  r_j^n,
\end{eqnarray*}
and $r_j, r_j^n$ are components of $u$.  Here $r_j\cdot r_k=\delta_{jk}r_j$ and
$r_j^n\cdot r_k^n=\delta_{jk}r_j^n$.
If we set
$$u_n:=\sum_{j=0}^n r_j\cdot r_j^n,$$
then $u_n\cdot(s-s_n)=0$ and $(u-u_n)\cdot (s-s_n)\ge u-u_n$, giving that 
\begin{equation}\label{2025-1}
P_{((s-s_n)-\frac{1}{2} u)^+}u=u-u_n.
\end{equation}
Now $s_n\uparrow s$ as $n\to\infty$ so $|s_n -s|\to 0$ in order as $n\to \infty$, which, together with the order continuity of $T$ gives $T|s_n-s|\to 0$ in order as $n\to\infty$, i.e. in the notation of \cite{AKRW}, $s_n\to s$ T-strongly.  Now, by \cite[Lemma 5.3]{AKRW}, it follows that $s_n\to s$ in T-conditional probability, that is
\begin{equation}\label{2025-2}
TP_{((s-s_n)-\epsilon u)^+}u\to 0,
\end{equation}
 in order as $n\to\infty$ for each $\epsilon>0$.
In particular taking $\epsilon=1/2$ in (\ref{2025-2}) and combining it with (\ref{2025-1}), we have that
$T(u-u_n)\to 0$ in order as $n\to\infty$.

We now consider the T-conditional probabilities of interest. As $u_nr_j=u_nr_j^n$ we have
\begin{eqnarray*}
|\mathbb{P}_T[s_n\in A]- \mathbb{P}_T[s\in A]|
&=&\left|\sum_{j\in A\cap\{0,\dots,n\}}Tr_j^n-\sum_{j\in A} Tr_j\right|\\
&\le &T\left|\sum_{j\in A\cap\{0,\dots,n\}}(u_nr_j^n+(u-u_n)r_j^n)-\sum_{j\in A} (u_nr_j+(u-u_n)r_j)\right|\\
&\le&T\left((u-u_n)\left|\sum_{j\in A\cap\{0,\dots,n\}}r_j^n-\sum_{j\in A} r_j\right|\right)\\
&\le&T(u-u_n)\to 0
\end{eqnarray*}
as $n\to\infty$.
\qed
\end{proof}

\begin{theorem}[Infinite sum law of small numbers]\label{lsn-inf}
Let  
$(q_j)$ be a $T$-conditionally independent sequence of components of $u$, with  
$$s:=\sum_{j=0}^\infty q_j\in L^1(T),$$
 then,  for all $A\subset \N_0$,
\begin{equation}\label{rs-lsn}
|\mathbb{P}_T[s\in A]- {\rm Po}(A;Ts)|\le \sup_{i\in\N_0} h_i,
\end{equation}
where $h_i=Tq_i$.
\end{theorem}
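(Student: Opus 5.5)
We approximate by finite sums and pass to the limit. Set $s_n:=\sum_{j=0}^n q_j$ and $H_n:=Ts_n=\sum_{j=0}^n h_j$. For each fixed $n$, the components $q_0,\dots,q_n$ are $T$-conditionally independent, since any finite subfamily of a $T$-conditionally independent family is again such a family (Definition \ref{def-n}). Theorem \ref{thm-main} therefore gives, for every $A\subset\N_0$,
$$|\mathbb{P}_T[s_n\in A]-{\rm Po}(A;H_n)|\le \sup_{i=0,\dots,n}h_i\le \sup_{i\in\N_0}h_i .$$
The supremum on the right exists in $R(T)$, because $0\le h_i\le Ts$ and $R(T)$ is Dedekind complete. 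By the triangle inequality,
$$|\mathbb{P}_T[s\in A]-{\rm Po}(A;Ts)|\le |\mathbb{P}_T[s\in A]-\mathbb{P}_T[s_n\in A]|+\sup_{i}h_i+|{\rm Po}(A;H_n)-{\rm Po}(A;Ts)|.$$
It therefore suffices to show that the first and last terms tend to $0$ in order as $n\to\infty$. Since the Riesz space is Archimedean, a fixed element dominated by $\sup_i h_i$ plus an order null sequence is itself dominated by $\sup_i h_i$.

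The first term tends to $0$ in order directly by Lemma \ref{lem-conv-dist}. For the last term, $s_n\uparrow s$ and $T$ is order continuous, so $H_n\uparrow Ts$ in $R(T)$. We then need continuity of $H\mapsto{\rm Po}(A;H)$ along this increasing sequence.

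The main obstacle is this continuity in the universally complete $f$-algebra $R(T)$, where $A$ may be infinite. We plan to use the representation of $R(T)$ as an order-dense ideal of some $C^\infty(X)$, or equivalently the functional calculus of \cite{g-fcal}. Under this representation each $H\mapsto {\rm Po}(A;H)$ is given pointwise by the real function $\phi_A(t)=\sum_{k\in A}t^ke^{-t}/k!$ for $t\ge 0$. This function takes values in $[0,1]$ and is uniformly Lipschitz: its derivative is $\sum_{k\in A}({\rm Po}(k-1;t)-{\rm Po}(k;t))$, which is bounded by $1$ in absolute value. Hence we aim for the bound
$$|{\rm Po}(A;H_n)-{\rm Po}(A;Ts)|\le |Ts-H_n| = Ts-H_n .$$

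If the Lipschitz bound is awkward to justify directly, we can instead verify it termwise. First, $|e^{-H_n}-e^{-Ts}|\le Ts-H_n$ follows from the Freudenthal representation of $e^{-H}$ together with monotonicity. Second, each $H_n^k\to (Ts)^k$ in order, since multiplication is order continuous. Then for finite $A$ we combine these facts. For infinite $A$ we use ${\rm Po}(A;H)=u-{\rm Po}(\N_0\setminus A;H)$ together with the tail estimate ${\rm Po}(\{m,m+1,\dots\};H)\le {\rm Po}(\{m,m+1,\dots\};Ts)$, which holds because the Poisson tail is increasing in the parameter. This tail estimate gives uniform-in-$n$ control of the tails, and the convergence of the finite parts handles the rest.

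Since $Ts-H_n=T(s-s_n)\downarrow 0$ in order, the last term tends to $0$. Letting $n\to\infty$ in the triangle inequality then yields (\ref{rs-lsn}).
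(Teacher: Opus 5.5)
Your proof is correct and has the same skeleton as the paper's. You truncate to $s_n$, apply Theorem \ref{thm-main} to $q_0,\dots,q_n$, use Lemma \ref{lem-conv-dist} for $\mathbb{P}_T[s_n\in A]$, and pass to the order limit.

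The one place you diverge is the continuity of $H\mapsto{\rm Po}(A;H)$. The paper stays inside the $f$-algebra. It splits ${\rm Po}(A;H_n)$ into $e^{-H_n}$ times the increasing sum $\sum_{0\ne k\in A}H_n^k/k!$, plus the $k=0$ term. That sum is dominated by $e^{Ts}$, so it converges by Dedekind completeness. The paper then invokes order continuity of $v\mapsto e^{-v}$ and of multiplication. You instead pass to a $C^\infty(X)$ representation of $R(T)$ and use that $\phi_A(t)=\sum_{k\in A}t^ke^{-t}/k!$ is $1$-Lipschitz on $[0,\infty)$.

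Your route gives an explicit estimate, uniform in $A$:
$$|{\rm Po}(A;H_n)-{\rm Po}(A;Ts)|\le T(s-s_n).$$
It also handles the $k=0$ term automatically. In the paper's display (\ref{2024-1-12-30}) that term should read $\chi_A(0)e^{-H_n}$ rather than $\chi_A(0)(I-P_{H_n})u$, although the limit argument survives the correction.

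The price of your route is that you must check that the order-defined ${\rm Po}(A;H)$ coincides with $\phi_A\circ\hat H$ in the representation. This holds because the partial sums converge locally uniformly in $t$, so the pointwise limit is continuous wherever $\hat H$ is finite. The paper's qualitative argument avoids any representation theorem.
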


\begin{proof}
Let
$$s_n:=\sum_{j=0}^n q_j\in L^1(T),$$
$H_n:=Ts_n$ and $h_j=Tq_j$.  Then Theorem \ref{thm-main} gives
\begin{equation}\label{eq-cor-1}
|\mathbb{P}_T[s_n\in A]- {\rm Po}(A;H_n)|\le \sup_{i=0,\dots,n} h_i\le \sup_{i\in\N_0} h_i.
\end{equation}
We note that the supremum on the right of (\ref{eq-cor-1}) exists due to the Dedekind completeness of $L^1(T)$, as $0\le h_j\le u$ for all $j$.

By Lemma \ref{lem-conv-dist}
$$\mathbb{P}_T[s_n\in A]\to \mathbb{P}_T[s\in A],$$
in order as $n\to \infty$.

Further $s_n\uparrow s$, so by the order continuity of $T$, $H_n=Ts_n\to Ts$ in order as $n\to\infty$ and 
\begin{equation}\label{2024-1-12-30}
{\rm Po}(A;H_n)=e^{-H_n}\sum_{0\ne k\in A}\frac{H_n^k}{k!}+\chi_A(0)(I-P_{H_n})u,
\end{equation}
for each $A\subset \N_0$ and $H\in R(T)$.
Here, $\chi_A(0)(I-P_{H_n})u\downarrow \chi_A(0)(I-P_{Ts})u,$ so
$\chi_A(0)(I-P_{H_n})u\to \chi_A(0)(I-P_{Ts})u$ in order as $n\to \infty$.
The exponential map $v\mapsto e^{-v}$ is order continuous for $v\in E_+$ and finally
$$\sum_{0\ne k\in A}\frac{H_n^k}{k!}\uparrow \sum_{0\ne k\in A}\frac{(Ts)^k}{k!}\le e^{Ts}\in R(T)\subset L^1(T),$$
so by the Dedekind completeness of $L^1(T)$,
$$\sum_{0\ne k\in A}\frac{H_n^k}{k!}\to \sum_{0\ne k\in A}\frac{(Ts)^k}{k!},$$
as $n\to\infty$. Hence, from the order continuity of the multiplication, we can take the order limit as $n\to\infty$ in (\ref{eq-cor-1}) to give (\ref{rs-lsn}). 
\qed
\end{proof}


\section{Application}

We now apply the above result to a general probability space $(\Omega,{\cal F},\mu)$  and $(B_i)$ be a sequence of events in ${\cal F}$, which are conditionally independent with respect to $\Sigma$, a sub-$\sigma$ algebra of ${\cal F}$.  For $B\in {\cal F}$ we have that 
$\P[B|\Sigma]= \E[\chi_B|\Sigma]$.
Let $$s=\sum_{k=0}^\gamma \chi_{B_k},$$
where $\gamma\in \N\cup\{\infty\}$.
If $$\lim_{n\uparrow \gamma} \sum_{k=0}^n \E[\chi_{B_k}|\Sigma]$$
exists a.e. pointwise, then this limit is denoted $\E[s|\Sigma]$ and
 from Theorems \ref{thm-main} and \ref{lsn-inf}, for  $A\subset \N_0$, we have that 
 \begin{equation}\label{final-2026}
 |\mathbb{P}[s\in A|\Sigma]-{\rm Po}(A;\E[s|\Sigma])|\le \sup_{k\in\{0,\dots,\gamma\}}
\mathbb{P}[B_k|\Sigma].
\end{equation}

We now illustrate this application with some computational examples. In the first simple example, both independence and conditional independence are available, but the outcomes of the classical theory with independence is compare to the outcome using conditional independence.
In the second simple example the random variables are only conditionally independent, hence making only the extended theory presented here applicable.  In the final example, not quite so simple, we give a sequence of
random variables which is conditionally independent but not independent and we apply to it our conditional theory.  Further to this, we highlight that the error bounds resulting from our conditional Stein-Chen method is a functional bound, yielding regions of good approximation and regions of poor approximation. By contrast the classical Stein-Chen method yields a constant  numeric bound applicable uniformly over the entire domain, which is uniformly good or uniformly bad.

{\bf Example 1.}
Let $\Omega=\{1,2,3,4\}$, ${\cal F}={\cal P}(\Omega)$ and $\mathbb{P}$ the measure on ${\cal F}$ generated by
$\mathbb{P}(\{j\})=\left\{\begin{array}{ll}3/8,& j=1,4\\ 1/8,& j=2,3\end{array}\right.$. Let $\Sigma=\{\emptyset, \{1,2\}, \{3,4\},\Omega\}$. Set $B_1=\{2,3\}$ and $B_2=\{3,4\}$ and let $s=\chi_{B_1}+\chi_{B_2}$.

Here $\mathbb{P}(B_1)=1/4$ and $\mathbb{P}(B_2)=1/2$, but $\mathbb{P}(B_1\cap B_2)=1/8=\mathbb{P}(B_1)\mathbb{P}(B_2)$, so $B_1$ and $B_2$ are independent. Further $\E[s]=3/4$. So the results of Stein and Chen give that 
$A\subset \N_0$, we have that 
$$|\mathbb{P}[s\in A]-{\rm Po}(A;3/4)|\le  1/2.$$

However, 
$\mathbb{P}[B_1|\Sigma](j)=\left\{\begin{array}{ll}1/4,& j=1,2\\ 1/4,& j=3,4\end{array}\right.$ and
$\mathbb{P}[B_2|\Sigma](j)=\left\{\begin{array}{ll}0,& j=1,2\\ 1,& j=3,4\end{array}\right.$ and thus
$$\mathbb{P}[B_1|\Sigma]\mathbb{P}[B_2|\Sigma](j)=\left\{\begin{array}{ll}0,& j=1,2\\ 1/4,& j=3,4\end{array}\right.=\mathbb{P}[B_1\cap B_2|\Sigma](j)$$
for $j=1,2,3,4$. Hence $B_1$ and $B_2$ are conditionally independent 
 with respect to $\Sigma$ and as such (\ref{final-2026}) is applicable. Here
$\mathbb{P}[B_1|\Sigma]\vee \mathbb{P}[B_2|\Sigma](j)=\left\{\begin{array}{ll}1/4,& j=1,2\\ 1,& j=3,4\end{array}\right.$
and $\mathbb{E}[s|\Sigma](j)=\left\{\begin{array}{ll}1/4,& j=1,2\\ 5/4,& j=3,4\end{array}\right.$ and 
(\ref{final-2026}) gives
$$ |\mathbb{P}[s\in A|\Sigma](j)-{\rm Po}(A;1/4)|\le 1/4$$
for $j=1,2$, and
$$ |\mathbb{P}[s\in A|\Sigma](j)-{\rm Po}(A;5/4)|\le 1$$
for $j=3,4$.
Note here the distinctly better bound obtain for $j=1,2$.

{\bf Example 2.}
In Example 1. reset the measure to be that given by
$\mathbb{P}(\{j\})=\left\{\begin{array}{ll}3/8,& j=1,3\\ 1/8,& j=2,4\end{array}\right.$.
Now
 $\mathbb{P}(B_1)=1/2=\mathbb{P}(B_2)$, but $\mathbb{P}(B_1\cap B_2)=3/8\ne \mathbb{P}(B_1)\mathbb{P}(B_2)$, so 
 $B_1$ and $B_2$ are no longer independent.
 However 
 $\mathbb{P}[B_1|\Sigma](j)=\left\{\begin{array}{ll}1/4,& j=1,2\\ 3/4,& j=3,4\end{array}\right.$ and
$\mathbb{P}[B_2|\Sigma](j)=\left\{\begin{array}{ll}0,& j=1,2\\ 1,& j=3,4\end{array}\right.$ and thus
$$\mathbb{P}[B_1|\Sigma]\mathbb{P}[B_2|\Sigma](j)=\left\{\begin{array}{ll}0,& j=1,2\\ 3/4,& j=3,4\end{array}\right.=\mathbb{P}[B_1\cap B_2|\Sigma](j)$$
for $j=1,2,3,4$. Hence $B_1$ and $B_2$
 are still conditionally independent with respect to $\Sigma$, hence (\ref{final-2026}) is applicable, but the standard theory of Stein and Chen in not. Here (\ref{final-2026}) yields
$$ |\mathbb{P}[s\in A|\Sigma](j)-{\rm Po}(A;1/4)|\le 1/4$$
for $j=1,2$, and
$$ |\mathbb{P}[s\in A|\Sigma](j)-{\rm Po}(A;7/4)|\le 1$$
for $j=3,4$.

It should be note in both of these example the occurence of $B_1$ is rare, but $B_2$ is only conditionally rare on $\{1,2\}$ but not on $\{3,4\}$, hence the usefulness of the bounds for $j=1,2$ and the uselessness for $j=3,4$.

{\bf Example 3.}
Let $\Omega=\N$ and ${\cal F}={\cal P}(\N)$.  Set 
$$\mathbb{P}(\{n\})=\left\{\begin{array}{ll}\frac{2}{3\cdot 2^k}, &n=2k-1, k\in\N,\\
\frac{1}{2^{2k}},&n=2k, k\in\N,
\end{array}\right.$$
 with countably additive extension to a probability measure on ${\cal P}(\N)$.   

Let $\Sigma$ be the sub-$\sigma$ algebra of ${\cal P}(\N)$ generated by the collection 
$\{\{2n-1, 2n\}\,|\,n\in\N\}$.
Thus for $f:\N\to \R$ we have
$$\E[f|\Sigma](n)
=\frac{
  \int_{\{2k-1,2k\}} f\,d\mathbb{P}}{\mathbb{P}(\{2k-1,2k\})}
=\frac{3 f(2k)+2^{k+1}f(2k-1)}{3+2^{k+1}} \mbox{ for } n\in\{2k-1,2k\},$$
with $n,k\in\N$.

Let $B_{2k-1}=\{ 4k-3,4k-2, 4k+1, 4k+2, 4k+5, 4k+6, \dots\}$ and $B_{2k}=\{4k\}$ for $k\in\N$.
It can be verified that $(B_j)$ is a $\P[\cdot|\Sigma]$ conditionally independent sequence.
Setting 
$p_j=\chi_{B_j}$ we have 
$$\E[p_{2k}|\Sigma](n)=\left\{\begin{array}{ll}\frac{3}{3+2^{2k+1}}, &n\in\{4k-1,4k\}\\
0,&\mbox{otherwise}\end{array}\right.,\, k,n\in\N,$$
and
\begin{align*}\E[p_{2k-1}|\Sigma](n)
&=p_{2k-1},\quad k\in \N.
\end{align*}

Thus
$$\sup_{j\in\N}
\mathbb{E}[p_j|\Sigma](n)=\left\{\begin{array}{ll}\frac{3}{3+2^{2k+1}}, &n\in\{4k-1,4k\}\\
1,&\mbox{otherwise}\end{array}\right.,\,n,k\in\N.$$
Now
$$s(n)=\sum_{j=1}^\infty p_j(n)=
\left\{\begin{array}{ll}
0, &n=4k-1,\\
1, &n=4k,\\
k, &n\in \{4k-3,4k-2\}
\end{array}\right. \,k\in\N.$$
Thus
$$\E[s|\Sigma](n)
=\left\{\begin{array}{ll}
\frac{3}{3+2^{2k+1}}, &n\in\{4k-1,4k\}\\
k, &n\in \{4k-3,4k-2\}
\end{array}\right. \,k\in\N,$$
giving
\begin{eqnarray*}{\rm Po}(j;\E[s|\Sigma])(n)&=&\frac{(\E[s|\Sigma](n))^j}{j!}e^{-\E[s|\Sigma](n)}\\
&=&\left\{\begin{array}{ll}
\frac{\left(\frac{3}{3+2^{2k+1}}\right)^j}{j!}{\rm exp}\left(-\frac{3}{3+2^{2k+1}} \right), &n\in\{4k-1,4k\}\\
\frac{k^j}{j!}e^{-k}, &n\in \{4k-3,4k-2\}
\end{array}\right. \,k\in\N,
\end{eqnarray*}

Now for $A\subset \N_0$,  we have
\begin{eqnarray*}|\mathbb{P}[s\in A|\Sigma](n)-{\rm Po}(A;\E[s|\Sigma])(n)|
&\le& 
\sup_{k\in\N}
\mathbb{P}[B_k|\Sigma](n)\\ &=&\left\{\begin{array}{ll}\frac{3}{3+2^{2k+1}}, &n\in\{4k-1,4k\}\\
1,&n\in\{4k-2,4k-3\}\end{array}\right.k\in \N
\end{eqnarray*}
a bound on the approximation which is useful for $n\in \{3,4,7,8,11,12,\dots\}$ and useless for $n\in \{1,2,5,6,9,10,\dots\}$.

 \end{document}